\newcolumntype{C}[1]{>{\centering\arraybackslash }b{#1}}
\theoremstyle{plain}
\numberwithin{equation}{section}
\newtheorem{theorem}{Theorem}[section]
\newtheorem{lemma}{Lemma}[section]
\newtheorem{proposition}{Proposition}[section]
\newtheorem{propABC}{Proposition}
\theoremstyle{remark}
\newtheorem{remark}{Remark}[section]
\DeclareMathOperator{\Rset}{\mathbb{R}}
\definecolor{brown}{rgb}{0.5,0,0}
\definecolor{backgroundcolor}{rgb}{0.98, 0.92, 0.73}
\newcommand{\ps}{p_{\mathsf S}}
\def\cfac#1{\ifmmode\setbox7\hbox{$\accent"5E#1$}\else\setbox7\hbox{\accent"5E#1}\penalty 10000\relax\fi\raise 1\ht7\hbox{\lower1.05ex\hbox to 1\wd7{\hss\accent"13\hss}}\penalty 10000\hskip-1\wd7\penalty 10000\box7 }
\author[Q.A. Ng\^o]{Qu\cfac oc Anh Ng\^o}
\address[Q.A. Ng\^o]{
Faculty of Mathematics-Mechanics-Informatics\\
VNU University of Science, Vi\^{e}t Nam National University\\
H\`{a} N\^{o}i, Vi\^{e}t Nam
--and--
Graduate School of Mathematical Sciences, The University of Tokyo, 3-8-1 Komaba, Meguro-ku, Tokyo 153-8914, Japan}
\email{\href{mailto: Q.A. Ng\^o <nqanh@vnu.edu.vn>}{nqanh@vnu.edu.vn}, \href{mailto: Q.A. Ng\^o <ngo@ms.u-tokyo.ac.jp>}{ngo@ms.u-tokyo.ac.jp}}
\author[V.H. Nguyen]{Van Hoang Nguyen}
\address[V.H. Nguyen]{Institute of Mathematics\\
Vietnam Academy of Science and Technology\\
Hanoi, Vietnam}
\email{\href{mailto: V.H. Nguyen <vanhoang0610@yahoo.com>}{vanhoang0610@yahoo.com}, \href{mailto: V.H. Nguyen <nvhoang@math.ac.vn>}{nvhoang@math.ac.vn}}
\author[Q.H. Phan]{Quoc Hung Phan}
\address[Q.H. Phan]{Institute of Research and Development\\
Duy Tan University\\
Da Nang, Vietnam}
\email{\href{mailto: Q.H. Phan <phanquochung@dtu.edu.vn>}{phanquochung@dtu.edu.vn}}
\author[D. Ye]{Dong Ye}
\address[D. Ye]{Center for Partial Differential Equations, School of Mathematical Sciences and Shanghai Key Laboratory of PMMP, East China Normal
University, Shanghai 200062, China
--and--
IECL, UMR 7502, Universit\'e de Lorraine, 57073 Metz, France}
\email{\href{mailto: D. Ye <dye@math.ecnu.edu.cn>}{dye@math.ecnu.edu.cn}, \href{dong.ye@univ-lorraine.fr}{ dong.ye@univ-lorraine.fr}}
\def\@cite#1#2{[\textbf{#1}\if@tempswa, #2\fi]}
\begin{document}

\allowdisplaybreaks

\title[Exhaustive existence and non-existence results for $\Delta^m u = \pm u^{\alpha}$ in $\Rset^n$]
{Exhaustive existence and non-existence results for some prototype polyharmonic equations in the whole space}

\begin{abstract}
In this paper, we are interested in entire, non-trivial, non-negative solutions and/or entire, positive solutions to the simplest models of polyharmonic equations with power-type nonlinearity
\[
\Delta^m u = \pm u^{\alpha} \quad \text{ in } \Rset^n
\]
with $n \geqslant 1$, $m \geqslant 1$, and $\alpha \in \Rset$. We aim to study the existence and non-existence of such classical solutions to the above equations in the full range of the constants $n$, $m$ and $\alpha$. Remarkably, we are able to provide necessary and sufficient conditions on the exponent $\alpha$ to guarantee the existence of such solutions in $\Rset^n$. Finally, we identify all the situations where any entire non-trivial, non-negative classical solution must be positive.
\end{abstract}

\date{\bf \today \; at \, \currenttime}

\subjclass[2010]{Primary 35B53, 35J91, 35B33; Secondary 35B08, 35B51, 35A01}

\keywords{Polyharmonic equation; Existence and non-existence; Liouville type; Maximum principle type}

\maketitle

\section{Introduction}

In this paper, we are interested in the existence and non-existence results for the following equations
\begin{equation}
\label{eqMAIN}
\Delta^m u = \pm u^{\alpha} \quad \mbox{in the whole Euclidean space $\Rset^n$},
\end{equation}
where $n, m \geqslant 1$, and $\alpha \in \Rset$ is a parameter. It is easy to see that equations \eqref{eqMAIN} can be rewritten in the form
$$(-\Delta)^m u = \pm u^{\alpha}.$$
However, we intend to keep rather the notation $\Delta^m$ instead of $(-\Delta)^m$ for the convenience of presentation.

Among others, one basic reason that we are interested in such equations is that \eqref{eqMAIN} are the simplest models of polyharmonic equations with power-type nonlinearity. In the literature, equations of the form \eqref{eqMAIN} have attracted much attention in various mathematical directions, including the existence and non-existence results, the multiplicity, the regularity, the stability of solutions, the asymptotic behaviors at infinity of entire solutions, as well as Liouville-type results, etc. For interested readers, we refer to the monograph \cite{GGS10} for further motivations and results.

The exponent $\alpha$ here can take any value in $\Rset$. Regarding the nonlinearity $u^\alpha$, it is usually called \textit{superlinear}, \textit{sublinear}, or \textit{singular} respectively if $\alpha>1$, $\alpha\in (0,1)$, or $\alpha<0$. As we can imagine, the existence of solutions to \eqref{eqMAIN} strongly depends on the range of the exponent $\alpha$, on the dimension $n$, and on the fact that $m$ is even or odd.

It is well known that the semilinear polyharmonic equations arise in many physics phenomena. For example, several particular cases of \eqref{eqMAIN} have their origins such as the elasticity, the equilibrium states for thin films, the modeling of electrostatic actuations, etc.

The equations \eqref{eqMAIN} have also their root in conformal geometry. The equation
$$-\Delta u = k(x)u^\frac{n+2}{n-2}$$
with $n \geqslant 3$ is closely related to the famous Yamabe problem and the prescribing scalar curvature problem. The geometric aspect of higher order cases $m \geqslant 2$ are related the problem of prescribing $Q$-curvature on Riemannian manifolds. Loosely speaking, given a Riemannian manifold $({\mathcal M}^n,g)$ of dimension $n$, we denote by $P_m^g$ the GJMS operator of order $m$, constructed by Graham, Jenne, Mason, and Sparling in the celebrated work \cite{GJMS92}. The prescribing $Q$-curvature problem asks us to look for a positive solution $u$ to the following partial differential equation on ${\mathcal M}^n$
$$P_{2m}^g(u)= Q(x) u^\frac{n+2m}{n-2m}.$$
Under conformal projection or as the limit equation of blow-up analysis, we are often led to understand the problems like
$$(-\Delta)^m u = \pm u^\frac{n+2m}{n-2m}$$
in $\Rset^n$, so a special case of \eqref{eqMAIN}.

If the second order case $m=1$ is well understood, the situation of polyharmonic problems $m\geqslant 2$ is much less clear. For example, as far as we know, we cannot find exhaustive results on the existence or non-existence of positive solutions to \eqref{eqMAIN} for all exponents $\alpha \in \Rset$. To be clear, by solutions in this paper, we mean the classical solutions. Our main purpose here is to give a {\it complete} answer to this question, that is, to establish the existence or the non-existence of positive solutions to \eqref{eqMAIN} for any $m, n \geqslant 1$, and $\alpha \in \Rset$. We will handle also the case of non-trivial non-negative solutions when $\alpha \geqslant 0$, with the natural convention $0^0 = 1$.

\smallskip
In other words, we will find the {\it necessary and sufficient} conditions on the exponent $\alpha$ to confirm the non-existence, i.e.~the Liouville-type results for positive solutions with real exponents $\alpha$; and the Liouville-type results for non-negative solutions in $\Rset^n$ provided $\alpha \geqslant 0$. Such results are sometimes called \textit{optimal} Liouville-type theorems. The reason to consider separately the two classes of solutions is due to the lack of the strong maximum principle for high order elliptic operators, when $m \geqslant 2$.

In recent years, the Liouville property has emerged as an important subject in the analysis of nonlinear partial differential equations. In particular, Polacik, Quittner, and Souplet \cite{PQS07, PQS07b} developed a general method to derive universal pointwise estimates of local solutions from Liouville-type results. Their approach is based on rescaling arguments combined with a key doubling property, which is different from the classical rescaling method of Gidas and Spruck \cite{GS81b}. It turns out that one can obtain from Liouville-type theorems a variety of results on qualitative properties of solutions, such as {\it a priori} estimates, universal bounds, universal singularity and decay estimates, etc. For this reason, we expect to see many applications of Liouville-type theorems obtained here.

Before closing this section, we would like to mention the outline of the paper. The next section is devoted to the statement of our main results, which consist of two theorems. Theorem \ref{thmMinus} concerns the solvability of \eqref{eqMAIN} with a negative sign, that is $\Delta^m u = - u^{\alpha}$, while Theorem \ref{thmPlus} concerns solutions to $\Delta^m u = u^{\alpha}$. The proofs of Theorems \ref{thmMinus} and \ref{thmPlus} are presented in Section \ref{Proofs}, where we used several important approaches, including {\it a priori} integral estimates derived for local solutions, interpolation inequalities, the comparison principle for radial solutions, the derivation of sub/super polyharmonic properties and the Moser's iteration. In the last section, we identify all the situations where an entire non-trivial, non-negative solution must be positive, by proving Propositions \ref{thmCompareSetMinus} and \ref{thmCompareSetPlus}.



\section{Statement of main results}
\label{section-StatementOfResults}

\subsection{Some known results}

Let us start by reviewing some well-known results concerning the existence and non-existence of solutions to the problems \eqref{eqMAIN}. We recall the Sobolev exponent
\begin{align}\label{Sobolevexponent}
	\ps(m) =
	\begin{cases}
		\dfrac{n+2m}{n-2m} & \text{ if } n \geqslant 2m+1,\\
		\infty & \text{ if } n \leqslant 2m.
	\end{cases}
\end{align}

The first known result is for positive solutions to \eqref{eqMAIN} in the singular case $\alpha<0$.

\begin{propABC}\label{propA}
Let $m\geqslant 2$ be an integer and $n \geqslant 3$. Assume $\alpha<-1/(m-1)$. Then \eqref{eqMAIN} always possesses positive solutions.
\end{propABC}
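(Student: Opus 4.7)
The strategy is to construct explicit positive radial solutions by exploiting the scaling symmetry of $\Delta^m u = \pm u^\alpha$. The natural candidate is $u(x) = A|x|^\beta$ with $\beta := 2m/(1-\alpha)$, the unique exponent making $|x|^{\beta-2m} = (|x|^\beta)^{\alpha}$. A direct computation gives
\[
\Delta^m\!\bigl(|x|^\beta\bigr) = c_{m,n}(\beta)\,|x|^{\beta-2m}, \qquad c_{m,n}(\beta) := \prod_{k=0}^{m-1}(\beta-2k)(\beta-2k+n-2),
\]
so $A|x|^\beta$ satisfies $\Delta^m u = \pm u^\alpha$ on $\mathbb{R}^n\setminus\{0\}$ once $A>0$ is normalised by $A^{1-\alpha}c_{m,n}(\beta)=\pm 1$. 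The hypothesis $\alpha<-1/(m-1)$ is equivalent to $\beta\in(0,2(m-1))$, a range in which one readily checks (by counting the negative factors $\beta-2k$ together with the positivity of $\beta-2k+n-2$ guaranteed by $n\geq 3$) that $c_{m,n}(\beta)\neq 0$. For the sign of the equation that matches $c_{m,n}(\beta)$, the function $A|x|^\beta$ is then an explicit singular solution; for the opposite sign I would either combine two admissible power exponents in a linear-combination ansatz, or run a parallel shooting argument based on the radial form of the equation.

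To upgrade this singular profile to a $C^{2m}$ classical solution on all of $\mathbb{R}^n$, I would smooth the origin via the regularised ansatz $u_\varepsilon(x):=A(\varepsilon+|x|^2)^{\beta/2}$, which is smooth, positive and radial, and then produce an exact solution by a fixed-point / contraction argument in a weighted radial function space centred on $u_\varepsilon$. A computation parallel to the one above shows that the residual $\Delta^m u_\varepsilon \mp u_\varepsilon^\alpha$ is of strictly lower order, and the condition $\beta<2(m-1)$ coming from $\alpha<-1/(m-1)$ is exactly what makes these correction terms integrable at the origin and compatible with the polyharmonic operator, so that the contraction closes and yields an entire positive solution.

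\textbf{Main difficulty.} The hardest step is the smoothing at the origin. Since $\Delta^m$ has no classical maximum principle for $m\geq 2$, the usual second-order sub/super-solution machinery is not directly available, and one must carry out a careful fixed-point or shooting argument in a functional framework tailored to the polyharmonic operator. It is precisely the hypothesis $\alpha<-1/(m-1)$, equivalently $\beta\in(0,2(m-1))$, that places the natural Emden exponent in the range where this regularisation procedure can be made to succeed.
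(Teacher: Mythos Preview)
Your proposal is not a proof but an outline, and several of its key assertions are wrong. First, the claim that ``the positivity of $\beta-2k+n-2$ is guaranteed by $n\geqslant 3$'' is false: for $k=m-1$ this factor equals $\beta-2m+n$, which is negative whenever $\beta<2m-n$; e.g.\ take $n=3$, $m=2$, $\beta<1$. More seriously, $c_{m,n}(\beta)$ can actually \emph{vanish} inside the interval $(0,2(m-1))$: for $m=2$, $n=3$ one has $c_{2,3}(1)=1\cdot 2\cdot(-1)\cdot 0=0$ (this is $\alpha=-3$), and for any $m\geqslant 3$ and any $n$ the factor $\beta-2$ kills $c_{m,n}(2)$ (this is $\alpha=1-m$). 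At these exponents there is no singular Emden solution at all, so your construction has nothing to perturb from. Second, the remark that for the opposite sign you would ``combine two admissible power exponents in a linear-combination ansatz'' cannot work: the equation is nonlinear in $u$, so a sum of solutions is not a solution.

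The most substantial gap, however, is the ``smoothing'' step. You assert that a contraction argument around $u_\varepsilon=A(\varepsilon+|x|^2)^{\beta/2}$ closes because the residual is of lower order, but you give no function space, no inverse for the linearised operator $\Delta^m-\alpha u_\varepsilon^{\alpha-1}$, and no smallness mechanism. Nothing in the hypothesis $\beta<2(m-1)$ by itself furnishes such an inverse; this is precisely the place where the lack of a maximum principle for $\Delta^m$ with $m\geqslant 2$ bites. By contrast, the paper does not attempt any of this: Proposition~A is quoted from Kusano--Naito--Swanson \cite{KNS88}, who apply the Schauder--Tychonoff fixed-point theorem directly to the integral reformulation and look for solutions with the \emph{polyharmonic} growth $C_1(1+|x|^{2m-2})\leqslant u\leqslant C_2(1+|x|^{2m-2})$, rather than the Emden growth $|x|^{\beta}$. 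The condition $\alpha<-1/(m-1)$ enters there simply as the finiteness of $\int_0^\infty t(1+t^{2m-2})^{\alpha}\,dt$, which makes the fixed-point map well defined on that cone; no singular profile and no desingularisation are needed.
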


Proposition \ref{propA} was proved by Kusano, Naito and Swanson via the Schauder--Tychonoff fixed-point theorem. More precisely, as a special case of Theorem 1 in \cite{KNS88}, if $n\geqslant 3$ and
\begin{align}\label{KNSest}
	\int_{0}^{\infty}t(1+t^{2m-2})^\alpha dt<\infty,
\end{align}
then \eqref{eqMAIN} possesses infinitely many positive radial solutions satisfying the following growth condition
\begin{align*}
	C_1(1+|x|^{2m-2})\leqslant u(x) \leqslant C_2(1+|x|^{2m-2}) \quad \mbox{in }\Rset^n,
\end{align*}
where $C_1$ and $C_2$ are positive constants. It is obvious that the integral in \eqref{KNSest} is finite when $\alpha<-1/(m-1)$.

\begin{remark}
We stress that the restriction on dimension $n\geqslant 3$ in Proposition \ref{propA}
is necessary for problem $\Delta^m u=-u^\alpha$; see the Proposition~\ref{pron=1,2} below for the non-existence result when $n\leqslant 2$.
\end{remark}

Next we collect some known results for the equation \eqref{eqMAIN} with a plus sign, namely
\begin{align}
\label{elliptic1}
(-\Delta)^{m} u= u^\alpha \quad \mbox{in }\Rset^n,
\end{align}
in the superlinear case $\alpha>1$. These results can be summarized as follows.

\begin{propABC}\label{propB}
Let $m$ be a positive integer. We have the following claims:
\begin{itemize}
 \item [(i)] If $1<\alpha<\ps (m)$ then the problem $(-\Delta)^{m} u=u^\alpha$ has no non-trivial non-negative solution in $\Rset^n$.
 \item [(ii)] If $n>2m$ and $\alpha \geqslant \ps(m)$, then the problem $(-\Delta)^{m} u=u^\alpha$ possesses positive radial solutions in $\Rset^n$.
\end{itemize}
\end{propABC}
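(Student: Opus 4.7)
The plan for part (i) follows the Wei--Xu/Lin strategy combining the super-polyharmonic property with an integral-equation/moving-sphere argument. Write $u$ for a nontrivial nonnegative $C^{2m}$ solution. As a first step I would establish the super-polyharmonic property
\[
(-\Delta)^j u \geqslant 0 \quad \text{in } \Rset^n, \quad j=0,1,\ldots,m-1.
\]
The standard way is to introduce the spherical averages $\overline{u}(r)=\fint_{\partial B_r(0)} u\, d\sigma$, which satisfy the ODE $(-\Delta_r)^m \overline{u}=\overline{u^\alpha}$ and, by Jensen's inequality (using $\alpha>1$), $\geqslant \overline{u}^{\,\alpha}$. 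An induction over $j$, ruling out terms that would force superlinear blow-up at infinity, yields nonnegativity of each $(-\Delta)^j\overline{u}$ at the origin, and then, by the translation invariance, everywhere. Strong maximum principle applied to $-\Delta[(-\Delta)^{m-1}u]=u^\alpha\geqslant 0$ then propagates positivity upward so that $u>0$.

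In the main range $n>2m$ (so $\ps(m)$ is finite), I would next convert the PDE into the integral equation
\[
u(x)=c_{n,m}\int_{\Rset^n} |x-y|^{2m-n}\, u(y)^{\alpha}\, dy,
\]
using the super-polyharmonic property to justify the representation via iterated Riesz potentials (comparing $u$ with truncated Newtonian potentials on balls and passing to the limit). On this integral equation I would apply the method of moving spheres with the Kelvin transform: for each $x_0\in\Rset^n$ and each $\lambda>0$, compare $u$ to its Kelvin reflection $u_{x_0,\lambda}$ on $B_\lambda(x_0)^c$ vs.\ $B_\lambda(x_0)$. The subcritical assumption $\alpha<\ps(m)$ makes the reflection inequality strict for every $\lambda$, so the moving-sphere procedure cannot start, which forces $u\equiv 0$; this is the same mechanism as in Chen--Li--Ou for $m=1$. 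For the low dimensional case $n\leqslant 2m$, the integral-equation reduction breaks down, so I would argue directly from the super-polyharmonic property: $(-\Delta)^{m-1}u$ is a nonnegative superharmonic function on $\Rset^n$ with $n\leqslant 2m$, hence on $\Rset^{n}$ with $n\leqslant 2$ after iteration it is forced to be constant by the classical Liouville theorem for superharmonic functions; descending through $j$ yields $u\equiv \text{const}$, and plugging into the equation forces this constant to be $0$.

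For part (ii), the construction is explicit at the Sobolev threshold and by shooting above it. When $\alpha=\ps(m)$, the conformal "bubble"
\[
U_{\mu}(x)=c_{n,m}\left(\frac{\mu}{\mu^2+|x|^2}\right)^{\frac{n-2m}{2}}, \quad \mu>0,
\]
with a suitable normalizing constant $c_{n,m}>0$, is a positive radial classical solution, as can be checked by direct computation (or invoked from the classical Lions/Swanson formulas). For $\alpha>\ps(m)$, I would pass to the radial ODE obtained from $(-\Delta)^m u=u^\alpha$ in $\Rset^n$, a $2m$-th order singular ODE in $r\in[0,\infty)$, and apply a shooting argument: parametrize initial data by $u(0)=a>0$, $u^{(2j+1)}(0)=0$, and $(-\Delta)^{j}u(0)=b_j>0$ for $j=1,\ldots,m-1$, and use continuous dependence together with the scaling $u_\lambda(r)=\lambda^{2m/(\alpha-1)}u(\lambda r)$ (which preserves the equation) and the slow decay $|x|^{-2m/(\alpha-1)}$ to produce a global positive radial solution; supercriticality $\alpha\geqslant \ps(m)$ means this slow decay is integrable enough to prevent blow-up in finite $r$. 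Alternatively, one may use the self-similar ansatz $u(r)=r^{-2m/(\alpha-1)}w(\log r)$ and analyze the autonomous ODE for $w$; the critical case reduces to the bubble, the supercritical case to trajectories connecting two equilibria.

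The main technical obstacle is the rigorous execution of the moving-sphere step on the integral equation in part (i): one has to justify the integral representation (hence the correct decay of $u$ and $u^\alpha$ at infinity), and then extract enough rigidity from the critical case of the reflection inequality to conclude $u\equiv 0$ in the strict subcritical range. The polyharmonic treatment of the ODE for $n\leqslant 2m$, and the verification that the Wei--Xu super-polyharmonic induction goes through without any a priori growth assumption on $u$, are the other delicate points.
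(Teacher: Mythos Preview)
The paper does not give its own proof of Proposition~B; it is a compendium of known results with references. For part~(i) with $n>2m$ your outline follows exactly the Wei--Xu route the paper cites (super-polyharmonic property, integral representation, moving planes/spheres), and for part~(ii) your bubble at the critical exponent plus shooting in the supercritical range is precisely what the paper attributes to Wei--Xu and Liu--Guo--Zhang. So on those ranges your plan matches the cited literature.

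There is, however, a genuine gap in your treatment of part~(i) when $3\leqslant n\leqslant 2m$. You write that $(-\Delta)^{m-1}u$ is a nonnegative superharmonic function and that ``on $\Rset^n$ with $n\leqslant 2$ after iteration it is forced to be constant''. But iteration of the Laplacian does not lower the spatial dimension: if, say, $n=3$ and $m=2$, then $(-\Delta)u$ is nonnegative and superharmonic on $\Rset^3$, and the Liouville theorem for superharmonic functions gives nothing (such functions need not be constant in dimensions $\geqslant 3$). Your descent argument therefore covers only $n\leqslant 2$, leaving the whole strip $3\leqslant n\leqslant 2m$ unhandled---and this is precisely the range where $\ps(m)=\infty$, so you must rule out \emph{every} $\alpha>1$. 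The paper points to two correct routes here: the rescaled test-function method of Mitidieri--Pohozaev, which yields the integral estimate $\int_{B_R}u^\alpha\,dx\leqslant CR^{n-2m\alpha/(\alpha-1)}\to 0$ for all $\alpha>1$ when $n\leqslant 2m$, or the representation-formula approach of Caristi--D'Ambrosio--Mitidieri. Either of these closes the gap; your superharmonic-Liouville idea does not.
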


Let us now comment on Proposition \ref{propB}. First, Part (i) is commonly known as the \textit{subcritical} case. From the definition of the Sobolev critical exponent $\ps (m)$ we know that $\ps (m)$ is finite if $n>2m$. In this setting, the second order case, namely $m=1$, was first established by Gidas and Spruck in \cite{GS81a} via the technique of nonlinear integral estimates and the Bochner formula. Chen and Li gave a different proof in \cite{CL} by using the moving plane method combined with the Kelvin transform. For higher order cases, Lin resolved in \cite{Lin98} the case $m=2$ and it was finally generalized by Wei and Xu in \cite{WX99} for any $m \geqslant 2$ via the argument of moving planes. For the remaining case $n\leqslant 2m$ and with arbitrary $m$, the non-existence result (i) can be deduced from the method of rescaled test-function \cite{MP01}; and also from the method of representation formula as presented in \cite{CAM08}.

Now we turn to the Part (ii). The case $m=1$ can be proved easily by applying the Pohozaev identity \cite{Poh65} with radial solutions on balls, or it
can be deduced from the shooting argument \cite{JL73}. In addition, if $\alpha = \ps(1)$, the \textit{critical} exponent, it was showed by Caffarelli, Gidas and Spruck in \cite{CGS89} that any positive solution to
$$-\Delta u = u^\frac{n+2}{n-2}\quad \mbox{in }\; \Rset^n$$
with $n \geqslant 3$ is radially symmetric, up to translations and dilations. This result was extended to the case of biharmonic equation by Lin \cite{Lin98} and to the general case $m \geqslant 2$ by Wei and Xu \cite{WX99}. More precisely, it was shown in \cite{WX99} that any positive solution $u$ to
\[
(-\Delta)^m u=u^\frac{n+2m}{n-2m} \quad \mbox{in }\; \Rset^n
\]
with $n > 2m \geqslant 2$ is of the following form
$$u(x)=\left(\frac{2\lambda}{1+\lambda^2|x-x_0|^2}\right)^\frac{n-2m}{2} \quad \mbox{ for some }\; x_0 \in \Rset^n, \; \lambda > 0.$$

Let us now turn to the \textit{supercritical} case, namely $\alpha > \ps(m)$. When $m=1$, the existence of positive solutions to \eqref{elliptic1} was obtained by Ni in \cite[Theorem 4.5]{Ni82}. For the higher order case, namely $m \geqslant 2$, the existence of positive solutions to \eqref{elliptic1} was shown by Liu, Guo and Zhang in \cite[Theorem 1.1]{LGZ06b}. They used a combination of the shooting method together with degree theory and the Pohozaev identity.

However, it becomes evident from the detailed description mentioned above that after putting together all the known results of the scientific literature the knowledge on this class of equations still appears quite fragmentary. Our aim in this paper is to consider all the situations $m \geqslant 2$, $n \geqslant 1$ and $\alpha \in \Rset$, and determine whether positive or non-negative solutions of \eqref{eqMAIN} exist.

For the sake of transparent presentation, we shall present our results in two different subcases according to the sign of the right-hand side. We will see that the range of $\alpha$ insuring the existence of solutions to equations \eqref{eqMAIN} strongly depends on the parity of $m$.

\subsection{Exhaustive results for $\Delta^m u=-u^\alpha$ in $\Rset^n$}
\label{subsec1}

As mentioned above, the results depend on the parity of $m$, it is more convenient to split the study for two equations:
\begin{align}\label{evenminus}
	\Delta^{2k} u=-u^\alpha \quad \mbox{in }\; \Rset^n \tag{P$_{2k}^-$}
\end{align}	
and
\begin{align}\label{oddminus}
	\Delta^{2k-1} u=-u^\alpha \quad \mbox{in }\; \Rset^n, \tag{P$_{2k-1}^-$}
\end{align}
where $k$ is a positive integer. For \eqref{evenminus}, as far as we know, there are many results which are limited to the case $k=1$, i.e.~for the biharmonic equation
\begin{align}\label{biharmonic}
	\Delta^2 u=-u^\alpha \quad \mbox{in }\; \Rset^n.
\end{align}
Here, the non-existence of positive solutions to \eqref{biharmonic} with $\alpha\in [-1,0]$ was first proved by Choi and Xu in \cite[Theorem 1.1]{CX09} for dimension $n=3$. This result was extended to all dimensions by Lai and Ye in \cite[Theorem 1.3]{LY16}. On the other hand, Proposition \ref{propA}, see also \cite[Theorem 3.1]{MR03}, ensures the existence of positive solutions for any $\alpha < -1$.

For the problem \eqref{oddminus}, when $k=1$, it is worth noticing that the class of positive solutions coincides with the one of non-trivial non-negative solutions, due to the strong maximum principle. For $k = 1$, the non-existence of positive solution when $\alpha\leqslant 1$ is well-known, see for instance the results in \cite[Theorem 2.7]{DM10} and \cite{AS11}; while the situations $\alpha > 1$ was fully settled in \cite{GS81a}. Recently, it was proved in \cite{DN17-arXiv} that
$$\Delta^3 u = -u^\alpha \quad \mbox{in }\; \Rset^3$$
has no positive solution if $\alpha \in [-1/2, 0)$.

We give here a complete answer to the question of existence for $\Delta^m u=-u^\alpha$. For convenience, we use the convention that $-1/(m-1)=-\infty$ when $m=1$. Our first result reads as follows:

\begin{theorem}\label{thmMinus}
Let $m$ be a positive integer. Then we have the following claims:
\begin{itemize}
\item [(i)] The problem $\Delta^m u=-u^\alpha$ possesses a positive solution if and only if either $n\geqslant 3$ and $\alpha < -1/(m-1)$ or $m$ is odd and $\alpha \geqslant \ps(m)$.
 \item [(ii)] The problem $\Delta^m u=-u^\alpha$ with $\alpha \geqslant 0$ has a non-trivial non-negative solution if and only if $m$ is odd and $\alpha \geqslant \ps(m)$.
\end{itemize}
\end{theorem}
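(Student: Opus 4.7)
The sufficiency direction needs no new work. When $n\geqslant 3$ and $\alpha<-1/(m-1)$ (which forces $m\geqslant 2$), Proposition \ref{propA} produces positive radial solutions directly. When $m$ is odd, the identity $\Delta^m=-(-\Delta)^m$ rewrites $\Delta^m u=-u^\alpha$ as $(-\Delta)^m u=u^\alpha$, so Proposition \ref{propB}(ii), whose hypothesis $n>2m$ is exactly the regime in which $\ps(m)$ is finite, supplies positive radial solutions for every $\alpha\geqslant \ps(m)$.

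For the non-existence direction I would split by the parity of $m$. When $m$ is odd the equation becomes $(-\Delta)^m u=u^\alpha$, and Proposition \ref{propB}(i) dispatches the range $1<\alpha<\ps(m)$. The remaining range $\alpha\in[-1/(m-1),1]$ with $n\geqslant 3$ is treated in three steps: first, establish the super-polyharmonic property $(-\Delta)^j u\geqslant 0$ for $0\leqslant j\leqslant m-1$ by a Wei-Xu-type iteration starting from $(-\Delta)^m u=u^\alpha\geqslant 0$; second, use Green's representation to extract a lower bound of polynomial type of the form $u(x)\gtrsim 1+|x|^{2m-2}$; third, substitute this growth back into the equation so that the integrability of $u^\alpha$ required by the representation breaks down precisely at $\alpha=-1/(m-1)$. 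For $n\leqslant 2$, I would cite the low-dimensional statement, Proposition \ref{pron=1,2}.

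When $m$ is even, the equation $\Delta^m u=-u^\alpha$ gives $(-\Delta)^m u=-u^\alpha\leqslant 0$, reversing the polyharmonic sign, so the direct super-polyharmonic iteration is no longer available. My plan is to first reduce to radial solutions using the comparison principle for radial solutions, then integrate the resulting ODE layer by layer to force $u(r)\gtrsim r^{2m-2}$ whenever $n\geqslant 3$ and $\alpha\geqslant -1/(m-1)$, and finally substitute back to make the integral $\int r^{n-1}u^\alpha dr$ diverge, which is incompatible with classical solvability. Part (ii) comes out of the same analysis: when $\alpha\geqslant 0$ one has $u^\alpha\geqslant 0$ from $u\geqslant 0$ under the convention $0^0=1$, so the iteration does not need strict positivity, and Moser iteration upgrades the regularity at potential zeros of $u$.

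The main obstacle I anticipate is the even $m$ regime $\alpha\in[-1/(m-1),0)$: the nonlinearity is singular, the map $u\mapsto u^\alpha$ is not monotone so symmetrization is delicate, and the sign of $(-\Delta)^m u$ is wrong for the standard Green representation argument. My way around this is to combine the \emph{a priori} interior integral estimates advertised in the introduction with interpolation inequalities to control intermediate iterated Laplacians, and then close with Moser iteration followed by the reduction to the radial ODE, where the comparison principle finally takes over.
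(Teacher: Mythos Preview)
Your sufficiency direction and your treatment of $n\leqslant 2$ and of $1<\alpha<\ps(m)$ for odd $m$ agree with the paper. The heart of your non-existence plan, however, rests on a step that fails. For odd $m$ you propose to derive $u(x)\gtrsim 1+|x|^{2m-2}$ from the super-polyharmonic property; but that property gives in particular $-\Delta u\geqslant 0$, so the spherical average $\overline u(r)$ is \emph{non-increasing} and bounded above by $u(0)$, directly contradicting any polynomial growth. The same confusion infects Step~3: when $\alpha<0$, a lower bound on $u$ yields only an \emph{upper} bound on $u^\alpha$, which cannot produce a divergent integral. For even $m$ the ``reduction to radial solutions via the comparison principle'' is not a valid move either: a generic solution is not radial, and Lemma~\ref{comparison} compares two given radial functions, it does not symmetrize a non-radial one. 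Passing to spherical averages is legitimate, but then $\overline{u^\alpha}\ne\overline u^{\,\alpha}$ and you must control the defect; your outline does not say how.

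The paper's route is parity-independent and runs in the opposite direction. From $\Delta^m u\leqslant 0$ one gets the \emph{upper} bound $\overline u(r)\leqslant Cr^{2(m-1)}$ (Lemma~\ref{lemprioriestimate}). For $\alpha\in[-1/(m-1),0)$ the convexity of $t\mapsto t^\alpha$ and Jensen's inequality turn this into a \emph{lower} bound $-\Delta\overline w\geqslant\overline u^{\,\alpha}\geqslant Cr^{2(m-1)\alpha}$ with $w=\Delta^{m-1}u>0$; two integrations force $\overline w(r)\to-\infty$, a contradiction (Proposition~\ref{proalpha<0}). For $\alpha\in(0,1]$ convexity is lost and the argument is genuinely harder: test functions give $\int_{B_R}u^\alpha\leqslant CR^{-2m}\int_{B_{2R}}u$, a doubling sequence tames the scale, and an $L^{q_h}$ bootstrap via Lemma~\ref{lem6} iterates until the exponents collapse (Proposition~\ref{proalpha>0}). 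For even $m$ and $\alpha>1$ the paper uses a rescaled test-function estimate $\int_{B_R}u\leqslant CR^{n-2m/(\alpha-1)}=o(R^n)$ together with a sub-polyharmonic Liouville lemma (Lemma~\ref{lem_general_Liouville}), not a radial ODE reduction. None of these ingredients appear in your plan for the ranges where they are needed.
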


The existence of positive or non-trivial non-negative solution to $\Delta^m u = -u^\alpha$ can be easily summarized in the following table.
\begin{center}
\begin{longtable}{
>{\centering\arraybackslash}p{.1\textwidth}|
>{\centering\arraybackslash}p{.185\textwidth}|
>{\centering\arraybackslash}p{.185\textwidth}|
>{\centering\arraybackslash}p{.185\textwidth}|
>{\centering\arraybackslash}p{.185\textwidth}
}
\hline
&
{\centering$\alpha < -\frac{1}{m-1} $}
	& $-\frac{1}{m-1}\leqslant \alpha < 0$
	& $0 \leqslant \alpha \leqslant 1$
	& $ \alpha > 1$ \\
\hline
\multirow{2}{.08\textwidth}{$u > 0$ $n\leqslant 2$} &  NO &  NO &  NO &  NO\\
& Prop. \ref{pron=1,2} & Prop. \ref{pron=1,2} & Prop. \ref{pron=1,2} & Prop. \ref{pron=1,2} \\
\hline
\multirow{2}{.08\textwidth}{$u > 0$ $n\geqslant 3$} & YES & NO &
\multirow{4}{.18\textwidth}{\centering NO \\ \medskip Props. \ref{pron=1,2} and \ref{proalpha>0}  } & \multirow{4}{.185\textwidth}{\centering YES \\ iff $m$ is odd and $\alpha \geqslant \ps(m)$ \\ \medskip Props. \ref{propB} and \ref{proalpha>1evenminus}} \\
& Prop. \ref{propA} & Prop. \ref{proalpha<0} &  \\
\hhline{---~|}
\multirow{2}{.08\textwidth}{$u \gneqq 0$} & \cellcolor{black!30} & \cellcolor{black!30} & & \\
& \cellcolor{black!30} & \cellcolor{black!30} & & \\
\hline
\caption{Existence results for problem $\Delta^m u =- u^\alpha$ in $\Rset^n$}
\label{tabevenminus}
\end{longtable}
\end{center}

Recall that we are concerned with classical solutions, then for $\alpha < 0$, there is no {\sl proper} non-negative solution, out of positive solutions. That's the reason of the above grey cells.

As the equation \eqref{oddminus} with $\alpha \geqslant \ps(m)$ always admits positive solutions, hence non-trivial, non-negative solution. A natural question for \eqref{oddminus} is that whether or not there is non-trivial, non-negative but {\it not positive} solution, the following maximum type result indicates that such solution does not exist.
\begin{proposition}\label{thmCompareSetMinus}
Let $m \geqslant 1$ and $\alpha > 1$. Then any non-trivial, non-negative solution to the equation $\Delta^m u =- u^\alpha$ in $\Rset^n$ must be positive everywhere.
\end{proposition}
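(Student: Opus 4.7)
The plan is to split into cases according to the parity of $m$. When $m$ is even and $\alpha > 1$, Theorem~\ref{thmMinus}(ii) already rules out the existence of any non-trivial non-negative solution to $\Delta^m u = -u^\alpha$, so the statement is vacuously true. Hence we may assume $m$ is odd, in which case $(-\Delta)^m = -\Delta^m$ and the equation $\Delta^m u = -u^\alpha$ is equivalent to $(-\Delta)^m u = u^\alpha \geqslant 0$.

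For odd $m$, the strategy is to reduce to the classical second-order strong maximum principle. Concretely, if we can show that $u$ is superharmonic, namely $-\Delta u \geqslant 0$, then applying the strong maximum principle to the non-negative superharmonic function $u$ gives the dichotomy $u \equiv 0$ or $u > 0$ on $\Rset^n$, and the non-triviality hypothesis forces $u > 0$ everywhere. When $m=1$ this is immediate from $-\Delta u = u^\alpha \geqslant 0$. For odd $m \geqslant 3$, superharmonicity will follow from the super-polyharmonic property: any non-negative classical solution to $(-\Delta)^m u = u^\alpha$ in $\Rset^n$ with $\alpha > 1$ satisfies $(-\Delta)^j u \geqslant 0$ for every $j \in \{0,1,\ldots,m-1\}$. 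Taking $j=1$ yields $-\Delta u \geqslant 0$, as required.

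The main obstacle is thus establishing this super-polyharmonic property, and it is at this step that the hypothesis $\alpha > 1$ plays a genuine role. The standard approach, going back to Wei and Xu, passes to spherical averages $\bar u_j(r) := \fint_{\partial B_r(0)} (-\Delta)^j u \, d\sigma$, which satisfy the radial recursion $-r^{1-n}(r^{n-1} \bar u_j')' = \bar u_{j+1}$ for $j = 0,\ldots,m-1$. Starting from $\bar u_m = \overline{u^\alpha} \geqslant 0$ and performing a downward induction on $j$, combined with the non-negativity of $\bar u_0 = \bar u$ and an asymptotic analysis as $r \to \infty$ and $r \to 0^+$ that uses the superlinearity $\alpha > 1$, one excludes any sign change in the intermediate iterates. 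Since the authors explicitly list sub/super polyharmonic properties among their principal technical tools in Section~\ref{Proofs}, this ingredient should already be in place from the proof of Theorem~\ref{thmMinus}, and the present proposition follows by combining it with the strong maximum principle as described above.
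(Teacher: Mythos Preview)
Your reduction to odd $m$ and the plan to obtain $-\Delta u \geqslant 0$ and then apply the strong maximum principle is a coherent alternative strategy. However, the key ingredient you invoke---the full super-polyharmonic property $(-\Delta)^j u \geqslant 0$ for every $0 \leqslant j \leqslant m-1$---is not established anywhere in this paper, contrary to your final claim. The only results of this type available here are Lemma~\ref{prow0}, which gives a single step (from $\Delta^m u \leqslant 0$ to $\Delta^{m-1} u \geqslant 0$) and cannot be iterated because the signs then go the wrong way, and the sign cascade inside the proof of Lemma~\ref{lem_general_Liouville}, which runs under the opposite hypothesis $(-\Delta)^m u \leqslant 0$ together with a growth condition. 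Moreover, the Wei--Xu and Chen--Li theorems you allude to are stated for \emph{positive} solutions; citing them to deduce positivity is circular, and your one-sentence sketch of the spherical-average argument does not supply a proof for merely non-negative $u$.

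The paper's own proof is shorter and self-contained, avoiding the full Wei--Xu machinery entirely. Assuming $u(0)=0$, it passes to the spherical average $\overline u$, which is non-trivial, non-negative, radial, and satisfies $(-\Delta)^m \overline u = \overline{u^\alpha} \geqslant 0$. The elementary dichotomy of Lemma~\ref{chanduoiw2} (proved by a short induction using only Lemma~\ref{prow0}) then forces $\overline u(r)\to +\infty$, hence $\int_{B_R} u\,dx \geqslant c R^n$ for large $R$. This contradicts the test-function bound $\int_{B_R} u\,dx \leqslant C R^{\,n-2m/(\alpha-1)}$ already obtained in the proof of Proposition~\ref{proalpha>1evenminus}. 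So the paper replaces the deep super-polyharmonic input by a soft radial lemma plus an integral estimate; your route would work in principle, but only after providing a genuine proof of the super-polyharmonic property for non-negative solutions, which is substantially more work than what the paper actually needs.
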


Clearly, our contributions in Theorem~\ref{thmMinus} are multifold:
\begin{itemize}
\item By determining the sign of $\Delta^{m-1} u$ with Lemma \ref{prow0}, we show quickly the non-existence of positive solution to $\Delta^m u = -u^\alpha$ in $\Rset^n$ with $n = 1, 2$ for any $\alpha \in \Rset$; see Proposition \ref{pron=1,2}.

\item For any $m\geqslant 1$ and $n \geqslant 3$, we obtain the non-existence of positive solution in the range $\alpha\in [-1/(m-1), 1]$; see Propositions \ref{proalpha<0} and \ref{proalpha>0}.

\item We obtain the non-existence of non-trivial, non-negative solution in the range $\alpha\in [0, \infty)$ for the equation \eqref{evenminus}; see Propositions~\ref{proalpha>0} and \ref{proalpha>1evenminus}.
\end{itemize}

\smallskip
The proof of the non-existence in the singular case $\alpha\in[-1/(m-1), 0)$ with $n\geqslant 3$ relies on the convexity of the function $t \mapsto t^\alpha$ and comparison principle. In the superlinear case $\alpha>1$ and for the equation \eqref{evenminus}, we made use of the integral estimate and a Liouville type result; see Lemma \ref{lem_general_Liouville}. However, the case $\alpha\in [0,1]$ is significantly more delicate, it seems that many well-known approaches, such as the standard rescaled test-function method \cite{MP01}, the moving plane technique \cite{CL}, the argument of maximum principle \cite{AS11}, the representation formula method \cite{CAM08}, or the derivation technique of super/sub polyharmonic property \cite{Lin98, WX99, CL13, Ngo17}, cannot be applicable in general. Our proof in the sublinear case is inspired by the idea of Serrin and Zou \cite{SZ96} and Souplet \cite{Sou09}, it is based on the integral estimates and Moser's iteration method; see the proof of Propositions~\ref{proalpha>0}.



\subsection{Exhaustive results for $\Delta^m u=u^\alpha$ in $\Rset^n$}
\label{subsec3}
We give here a complete answer to the question of existence for \eqref{eqMAIN} with the plus sign. Our second result reads as follows:
\begin{theorem}\label{thmPlus}
	Let $m$ be a positive integer. Then we have the following claims.
\begin{itemize}
 \item [(i)] The problem $\Delta^m u=u^\alpha$ possesses positive solutions if and only if either $\alpha \leqslant 1$ or $m$ is even and $\alpha\geqslant \ps(m)$.
 \item [(ii)] The problem $\Delta^m u=u^\alpha$ with $\alpha \geqslant 0$ has non-trivial non-negative solutions if and only if either $0\leqslant \alpha \leqslant 1$ or $m$ is even and $\alpha\geqslant \ps(m)$.
\end{itemize}
\end{theorem}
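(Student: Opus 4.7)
The plan is to split the proof of Theorem~\ref{thmPlus} by the parity of $m$ and the range of $\alpha$, using the identity $\Delta^m = (-1)^m (-\Delta)^m$. When $m$ is even, $\Delta^m u = u^\alpha$ coincides with $(-\Delta)^m u = u^\alpha$, so Proposition~\ref{propB} already supplies non-existence for $1 < \alpha < \ps(m)$ and existence of positive radial solutions for $\alpha \geqslant \ps(m)$ (the latter being vacuous when $n \leqslant 2m$). Thus only two substantive tasks remain: (a) produce positive entire solutions for $\alpha \leqslant 1$ for any $m$, and (b) rule out non-trivial non-negative solutions for $\alpha > 1$ when $m$ is odd. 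Part~(ii) will then follow from Part~(i), since positive solutions are a fortiori non-trivial non-negative, and the argument for (b) will be set up to exclude all non-trivial non-negative solutions, not only the positive ones.

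For the existence direction in $\alpha \leqslant 1$, I would offer explicit or semi-explicit constructions. The separable positive function $u(x) = \sum_{i=1}^n \cosh(x_i)$ solves $\Delta^m u = u$, since $\Delta^m \cosh(x_i) = \cosh(x_i)$ for each coordinate, handling the threshold $\alpha = 1$. At $\alpha = 0$, the polynomial $u(x) = c_{m,n}|x|^{2m} + 1$, with $c_{m,n}$ chosen so that $\Delta^m(c_{m,n}|x|^{2m}) = 1$, is positive and satisfies $\Delta^m u = 1 = u^0$. For general $\alpha \in (-\infty, 0) \cup (0, 1)$, I would use the radial shooting method with initial datum $u(0) = \lambda > 0$ and the symmetry conditions on the odd-order derivatives, and then appeal to the comparison principle for radial polyharmonic equations together with a priori integral estimates of the iterated Laplacians on expanding balls to see that the sublinear (or singular) character of $u^\alpha$ prevents any finite-radius singularity, yielding a global positive solution.

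The heart of the argument is the non-existence in the remaining case, $m$ odd and $\alpha > 1$. Passing to the spherical average $\bar u(r)$, Jensen's inequality gives $\Delta^m \bar u = \overline{u^\alpha} \geqslant \bar u^\alpha$, and by propagating the non-negativity of $\Delta^m u = u^\alpha \geqslant 0$ upward through the chain $\Delta \overline{\Delta^k u} = \overline{\Delta^{k+1}u}$ (a sub/super-polyharmonic type derivation), one extracts that $\bar u$ is a positive radial function satisfying $\Delta^m \bar u \geqslant \bar u^\alpha$ on $[r_0, \infty)$ for some $r_0 \geqslant 0$. A~priori integral estimates on expanding annuli combined with interpolation inequalities then set up a Moser-type iteration upgrading local integrability to pointwise bounds, on top of which an Osserman-type energy argument (multiply the radial inequality by $\bar u'$ and integrate) forces finite-radius blow-up, contradicting entire existence.

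The main obstacle is precisely this blow-up step for odd $m \geqslant 3$. For $m = 1$ the classical Osserman theorem applies at once, and the Mitidieri--Pokhozaev rescaled test function method covers all $\alpha > 1$ when $n \leqslant 2m$ and only the range $1 < \alpha < n/(n - 2m)$ when $n > 2m$, which strictly falls short of $\ps(m)$. Bridging this gap in high dimensions relies on the cooperative sign structure $u^\alpha \geqslant 0$ and on the sub/super-polyharmonic derivation and Moser iteration sketched above. Once case (b) is settled, the non-existence direction of Part~(ii) for $m$ odd and $\alpha > 1$ follows at once, while the remaining non-existence in Part~(ii) ($m$ even, $1 < \alpha < \ps(m)$) is already contained in Proposition~\ref{propB}, completing the proof of Theorem~\ref{thmPlus}.
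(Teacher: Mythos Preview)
Your decomposition is right and your handling of the even-$m$ case via Proposition~\ref{propB} is fine. The existence construction for $\alpha\leqslant 1$ is in the same spirit as the paper's (radial shooting plus comparison), though the paper gives an explicit global supersolution $u^*(r)=\lambda e^{r^2/2}$ and the trivial subsolution $u_*\equiv 1$, which makes the argument immediate and uniform in $\alpha$; your sketch would work once fleshed out.

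The genuine gap is in case~(b), $m$ odd and $\alpha>1$. You correctly identify this as the crux, but the route you propose---Jensen on the spherical average, then Moser iteration, then an Osserman-type blow-up for $\Delta^m\bar u\geqslant\bar u^\alpha$---is not known to work for $m\geqslant 3$, and you say as much. Multiplying by $\bar u'$ and integrating does not close an energy inequality once $m\geqslant 2$, and there is no higher-order Keller--Osserman theory available off the shelf. Your remark about the Mitidieri--Pokhozaev test-function range is also misdirected: for $(-\Delta)^m u = -u^\alpha$ the test-function method does not give a Liouville result by itself in any range of $\alpha$, because the sign is wrong for the usual absorption trick.

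What the paper does instead is short and avoids all of this. First, the rescaled test-function argument applied to $(-\Delta)^m u=-u^\alpha$ with $\phi_R=\psi^p(x/R)$, $p=2m\alpha/(\alpha-1)$, gives the integral decay
\[
\int_{B_R} u\,dx \leqslant C R^{\,n-\frac{2m}{\alpha-1}} = o(R^n).
\]
Second, and this is the missing key lemma, one shows: if $u\geqslant 0$, $(-\Delta)^m u\leqslant 0$ in $\Rset^n$, and $\int_{B_R}u=o(R^n)$, then $u\equiv 0$. The proof is a backward induction on $k$ showing $(-\Delta)^k u\leqslant 0$ for all $k$: the odd-$k$ steps use Lemma~\ref{prow0}, and the even-$k$ steps use the monotonicity of $\overline{(-\Delta)^k u}$ together with the $o(R^n)$ bound to rule out any interior positivity. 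No Moser iteration, no Osserman argument, and no restriction on $\alpha$ beyond $\alpha>1$ is needed.
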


The results of Theorem \ref{thmPlus} are summarized in the following table.
\begin{center}
\begin{longtable}{
>{\centering\arraybackslash}p{.1\textwidth}|
>{\centering\arraybackslash}p{.256\textwidth}|
>{\centering\arraybackslash}p{.256\textwidth}|
>{\centering\arraybackslash}p{.256\textwidth}
}
\hline
&
{\centering$\alpha < 0$}
	& $0 \leqslant \alpha \leqslant 1$
	& $1<\alpha$ \\
\hline
\multirow{2}{.08\textwidth}{$u > 0$} & YES &
\multirow{4}{.18\textwidth}{\centering YES \\ \medskip Prop. \ref{proalpha<1plus}  } & \multirow{4}{.25\textwidth}{\centering YES \\ iff $m$ is even and $\alpha \geqslant \ps(m)$ \\ \medskip Props. \ref{propB} and \ref{proalpha>1evenminus}} \\
& Prop. \ref{proalpha<1plus} &    \\
\hhline{--~|}
\multirow{2}{.08\textwidth}{$u \gneqq 0$} & \cellcolor{black!30} & & \\
& \cellcolor{black!30}  & & \\
\hline
\caption{Existence results for the problem $\Delta^m u = u^\alpha$ in $\Rset^n$}
\label{tabevenplus}
\end{longtable}
\end{center}

As before, we will split our study into two equations according to the parity of $m$, that is,
\begin{align}\label{evenplus}
	\Delta^{2k} u=u^\alpha \quad \mbox{in } \; \Rset^n \tag{P$_{2k}^+$}
\end{align}	
and
\begin{align}\label{oddplus}
	\Delta^{2k-1} u=u^\alpha \quad \mbox{in } \; \Rset^n, \tag{P$_{2k-1}^+$}
\end{align}	
where $k$ is a positive integer. Our contribution are twofold here:
\begin{itemize}
\item We give a unified proof of the existence of positive solutions for all $\alpha\leqslant 1$; see Proposition \ref{proalpha<1plus}.
\item We prove the non-existence of non-trivial, non-negative solutions for \eqref{oddplus} with any $\alpha > 1$; see Proposition \ref{proalpha>1evenminus}.
\end{itemize}

In the second order case, it is well-known that the problem $\Delta u=u^\alpha$ in $\Rset^n$ has no positive solution if $\alpha>1$, but it possesses a positive one if $\alpha\leqslant 1$. More precisely, the non-existence for the superlinear case $\alpha>1$ is a consequence of the so-called Keller--Osserman criteria developed by Keller \cite{Kel57} and Osserman \cite{Oss57}. Their theory can be employed to show that the equation $\Delta u = u^\alpha$ admits no non-trivial, non-negative, entire solution whenever $\alpha > 1$, see also \cite[Lemma 2]{Bre84}. When $\alpha \leqslant 1$, the existence of radial solutions can be easily obtained by the monotonicity of $u(r)$. We note that for $m \geqslant 2$, we can also apply Proposition \ref{propA} to obtain the existence of solutions to \eqref{evenplus} and \eqref{evenminus} for $\alpha < -1/(m-1)$.

Similarly to the question raised for \eqref{oddminus}, we can ask here if the set of non-trivial, non-negative solutions and the set of positive solutions coincide. We provide a complete answer as follows.

\begin{proposition}\label{thmCompareSetPlus}
Let $m$ be a positive integer and $\alpha \geqslant 0$. Then the equation $\Delta^m u = u^\alpha$ possesses entire, non-trivial, non-negative but {\it not strictly positive} solution in $\Rset^n$  if and only if $$\alpha \in [0, 1] \quad \mbox{and}\quad (\alpha, m) \ne (1,1).$$
In other words, if either $\alpha > 1$ or $(\alpha, m) = (1,1)$, then any entire, non-trivial, non-negative solution to $\Delta^m u = u^\alpha$ must be positive everywhere.
\end{proposition}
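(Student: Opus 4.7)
My plan is to split the statement into its "if" and "only if" directions.

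For the "if" direction, I have to produce, for every $\alpha \in [0,1]$ with $(\alpha,m) \ne (1,1)$, an entire non-trivial, non-negative solution of $\Delta^m u = u^\alpha$ that vanishes at some point. If $0 \leqslant \alpha < 1$, I would try the explicit radial monomial $u(x) = c\,|x|^{2m/(1-\alpha)}$: the formula $\Delta^m |x|^p = A(p)\,|x|^{p-2m}$ with $A(p) = \prod_{i=0}^{m-1}(p-2i)(p-2i+n-2)$ forces $p = 2m/(1-\alpha)$ and determines a unique $c > 0$. Since $p \geqslant 2m$, the function $|x|^p$ belongs to $C^{2m}(\Rset^n)$ (a polynomial when $\alpha = 0$), and $u$ vanishes precisely at the origin. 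For $\alpha = 1$ with $m \geqslant 2$ the monomial degenerates, so I would instead build an entire power series $u(x) = \sum_{j \geqslant 0} c_j |x|^{2j}$ satisfying the recursion $c_{j+m} A(2(j+m)) = c_j$ for $j \geqslant 0$. Because $m \geqslant 2$, the free initial data $c_0, c_1, \ldots, c_{m-1}$ contain at least two entries; choosing $c_0 = 0$, $c_1 > 0$ and $c_2 = \cdots = c_{m-1} = 0$ produces positive coefficients exactly at the indices $1, m+1, 2m+1, \ldots$, and the asymptotic $A(2(km+1)) \asymp k^{2m}$ yields $c_{km+1} = O(1/(k!)^{2m})$, so the series converges on all of $\Rset^n$ and defines an entire non-negative solution vanishing only at the origin. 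The hypothesis $m \geqslant 2$ is essential here: for $m = 1$ the unique free datum $c_0 = 0$ forces $u \equiv 0$.

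For the "only if" direction I must show that every non-trivial non-negative solution is strictly positive whenever $(\alpha,m) = (1,1)$ or $\alpha > 1$. When $(\alpha, m) = (1,1)$, the linear equation $(\Delta - I)u = 0$ has non-positive zero-order coefficient, so the classical strong maximum principle forbids $u \geqslant 0$ from attaining its non-positive minimum $0$ at an interior point unless $u \equiv 0$. When $\alpha > 1$, Theorem~\ref{thmPlus}(ii) already rules out any non-trivial non-negative solution unless $m$ is even and $\alpha \geqslant \ps(m)$, so only this sub-case remains. There I would apply the super-polyharmonic property in the spirit of \cite{Lin98,WX99,CL13,Ngo17}: since $m$ is even, $(-\Delta)^m u = u^\alpha \geqslant 0$, and combined with $u \geqslant 0$ and $\alpha > 1$ this yields $(-\Delta)^j u \geqslant 0$ for every $1 \leqslant j \leqslant m-1$. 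In particular $-\Delta u \geqslant 0$, so $u$ is super-harmonic; the strong minimum principle then forces $u \equiv 0$ as soon as $u$ vanishes at a point, contradicting non-triviality.

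The main technical obstacle is that the super-polyharmonic property in the existing literature is often stated for strictly positive $u$, whereas here $u$ is a priori only non-negative. The Wei--Xu bootstrap --- assume by contradiction $(-\Delta)^{m-1}u(x_0) < 0$, perform iterated spherical averaging around $x_0$, use Jensen's inequality $\overline{u^\alpha}(r) \geqslant \bar u(r)^\alpha$ (convexity of $t \mapsto t^\alpha$ on $[0,\infty)$ for $\alpha > 1$) to derive a polynomial lower bound on $\bar u(r)$, then feed this back and iterate to contradict the monotonicity of the average of $(-\Delta)^{m-1}u$ --- relies only on $u \geqslant 0$ and $u \not\equiv 0$, so a careful inspection shows the argument extends to our setting; this verification is the step that I expect to need the most care.
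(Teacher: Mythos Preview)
Your proposal is correct, but the ``only if'' direction for $\alpha>1$ takes a genuinely different route from the paper.

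For the ``if'' direction your constructions coincide with the paper's: the monomial $c|x|^{2m/(1-\alpha)}$ for $\alpha\in[0,1)$ is used verbatim, and your power-series solution for $\alpha=1$, $m\geqslant 2$ is precisely the solution of the radial initial-value problem the paper invokes (with $u(0)=0$, $\Delta u(0)>0$), just written out explicitly rather than obtained via the sub-/super-solution machinery of Proposition~\ref{proalpha<1plus}. For $(\alpha,m)=(1,1)$ your appeal to the strong maximum principle for $\Delta u - u = 0$ is more direct than the paper's Gronwall estimate on the spherical average; both are short.

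The real divergence is in the case $\alpha>1$, $m$ even. You aim for the full super-polyharmonic property $(-\Delta)^j u\geqslant 0$, $1\leqslant j\leqslant m-1$, and then finish with the strong minimum principle for super-harmonic functions. The paper avoids this entirely: it reuses the integral estimate $\int_{B_R} u\,dx \leqslant C R^{n-2m/(\alpha-1)}$ already derived in the proof of Proposition~\ref{proalpha>1evenminus}, takes the spherical average $\overline u$ (which satisfies $(-\Delta)^m\overline u\geqslant 0$ and $\overline u(0)=0$), and invokes the elementary dichotomy of Lemma~\ref{chanduoiw2} to force $\overline u(r)\to+\infty$, contradicting the integral bound. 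This is shorter and fully self-contained within the paper; your route buys a stronger intermediate conclusion (pointwise super-harmonicity of $u$ itself), but at the cost of the verification you flag---extending the Wei--Xu/Chen--Li bootstrap to merely non-negative $u$. That extension does go through (the contradiction in the bootstrap comes from $\overline u$ eventually violating either non-negativity or a growth bound, and neither step needs strict positivity), but it is a page of work the paper simply sidesteps.
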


Before closing this section, we would like to comment on Propositions \ref{thmCompareSetMinus} and \ref{thmCompareSetPlus}. From our point of view, these results can be regarded as maximum principle results. As far as we know, similar results do exist in the literature, however, with some limitations, see for example \cite{CAM08}.



\section{Preliminaries}

In what follows, the notation $\Delta^i u$ stands for $u$ when $i=0$. The notation $B_r$ is always understood as the open ball $B_r(0)$ centered at the origin with radius $r$. Also, we denote always by $\overline u (r)$ the spherical average of $u$ centered at the origin on the sphere $\partial B_r$, the boundary of the ball $B_r$, that is
\[
\overline u (r) = \frac{1}{|\partial B_r|}\int_{\partial B_r} u d\sigma.
\]
When $u$ is a radial function, we also use the notation $u(r)$. Throughout the paper, the symbol $C$ denotes a generic positive constant whose value could be different from one line to another.

Here are some basic results, which will be useful for our analysis.

\begin{lemma}
\label{comparison}
Let $m \geqslant 1$ and $v_1, v_2: B_R \to I \subset \Rset$ be two $C^{2m}$ radial functions verifying
$$\Delta^m v_1 \geqslant f(v_1), \quad \Delta^m v_2 \leqslant f(v_2) \quad \mbox{in }\; B_R$$
and
$$\Delta^iv_1(0) \geqslant \Delta^iv_2(0), \quad \forall\; 0\leqslant i \leqslant m-1.$$
If $f$ is non-decreasing in $I$, then $v_1 \geqslant v_2$ in $B_R$. In other words, $v_1(r) \geqslant v_2(r)$ for all $r \in [0, R)$.
\end{lemma}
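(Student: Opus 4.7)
The plan is to argue by contradiction and iterate a one-step radial monotonicity fact: any radial $C^2$ function $h$ on $[0,\rho]$ satisfies $h'(0)=0$, and the identity $(r^{n-1}h'(r))'=r^{n-1}\Delta h(r)$ shows that $\Delta h\geqslant 0$ on $[0,\rho]$ together with $h(0)\geqslant 0$ force $h$ to be non-decreasing (hence non-negative) on $[0,\rho]$. Setting $w_i=\Delta^iv_1-\Delta^iv_2$ for $i=0,1,\ldots,m$, the hypotheses give $w_i(0)\geqslant 0$ for $0\leqslant i\leqslant m-1$ and, combining the two differential inequalities,
\[
w_m\geqslant f(v_1)-f(v_2)\quad\mbox{in }B_R.
\]

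Suppose toward contradiction that $w_0(\rho)<0$ for some $\rho\in(0,R)$, and set $r_*=\sup\{r\in[0,\rho]:w_0\geqslant 0\mbox{ on }[0,r]\}$. By continuity, $w_0\geqslant 0$ on $[0,r_*]$, $w_0(r_*)=0$, and $r_*<\rho$. On $[0,r_*]$, monotonicity of $f$ yields $w_m\geqslant 0$; iterating the one-step fact from $w_{m-1}$ down to $w_0$ (each satisfies $\Delta w_i=w_{i+1}\geqslant 0$ and $w_i(0)\geqslant 0$) shows every $w_i$ is non-decreasing and non-negative on $[0,r_*]$. Combined with $w_0(r_*)=0$, this forces $w_0\equiv 0$ on $[0,r_*]$; hence $v_1\equiv v_2$ on $[0,r_*]$, and consequently $w_i\equiv 0$ on $[0,r_*]$ with $w_i(r_*)=w_i'(r_*)=0$ for all $i$.

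To finish I re-anchor the iteration at $r_*$. Letting $s=\sup\{r\in[r_*,\rho]:w_0\geqslant 0\mbox{ on }[r_*,r]\}\geqslant r_*$, the generic case is $s>r_*$, in which $w_0\geqslant 0$ on $[r_*,s]$ and $w_0(s)=0$; then $f(v_1)\geqslant f(v_2)$ gives $w_m\geqslant 0$ on $[r_*,s]$, and the identity $r^{n-1}w_i'(r)=\int_{r_*}^r t^{n-1}w_{i+1}(t)\,dt$ (valid from $w_i'(r_*)=0$) propagates, iteratively, $w_i\geqslant 0$ and non-decreasing down to $i=0$. Hence $w_0\equiv 0$ on $[r_*,s]$, so $v_1\equiv v_2$ on $[0,s]$ with $s>r_*$, contradicting the definition of $r_*$.

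The main obstacle is the degenerate case $v_1\equiv v_2$ on $[0,r_*]$, and within it the pathological sub-case $s=r_*$ in which $w_0$ may take negative values arbitrarily close to $r_*^+$. This is resolved by the re-anchored iteration, which exploits the matching zero data $w_i(r_*)=w_i'(r_*)=0$ together with the higher-order flatness of $w_0$ at $r_*$ (all derivatives up to order $2m$ vanish by $C^{2m}$ regularity and the identity $v_1\equiv v_2$ on $[0,r_*]$) to propagate the chain of non-negativities across $r_*$ and ultimately force $v_1\equiv v_2$ on a strictly larger initial interval.
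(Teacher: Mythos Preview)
The paper does not prove this lemma; it simply cites \cite[Proposition A.2]{FF16} and \cite[Remark 2.3]{LS09}. Your argument is the standard one and the first two paragraphs are fine: iterating the radial integral identity shows that on $[0,r_*]$ every $w_i$ is non-negative and non-decreasing, and $w_0(r_*)=0$ then forces $w_0\equiv 0$, hence $v_1\equiv v_2$ on $[0,r_*]$.

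The gap is precisely where you place it, the sub-case $s=r_*$, and the final paragraph does not close it. The re-anchored iteration needs $w_m\geqslant 0$ on $(r_*,s]$, which in turn needs $w_0\geqslant 0$ there; when $s=r_*$ there is no interval to work on, and the $C^{2m}$ flatness of $w_0$ at $r_*$ does not by itself prevent $w_0$ from turning negative immediately to the right. In fact this gap cannot be filled, because the lemma as stated is \emph{false}: take $m=1$, $I=[0,\infty)$, $f(t)=2\sqrt{t}$, $v_1\equiv 0$, and $v_2(r)=r^4/\big(4(n+2)^2\big)$. Both are smooth radial solutions of $\Delta v=f(v)$ with $v(0)=0$, yet $v_1<v_2$ for $r>0$; here $r_*=s=0$ and your scheme stalls at the outset. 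This is simply the failure of ODE uniqueness for non-Lipschitz nonlinearities. The cited references presumably carry a local Lipschitz hypothesis on $f$, and every use of the lemma in the paper takes $f\equiv 0$, $f\equiv 1$, or $f(t)=t^\alpha$ on an interval bounded away from $0$, all locally Lipschitz; under that additional hypothesis your degenerate case can indeed be closed (for instance by a Gronwall estimate near $r_*$, or by perturbing to strict initial data and passing to the limit).
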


The above comparison principle is a special form of more general well-known results; see for instance \cite[Proposition A.2]{FF16} or \cite[Remark 2.3]{LS09}. An easy consequence of the above comparison principle is the following pointwise estimate.

\begin{lemma}\label{lemprioriestimate}
Let $u$ be in $C^{2m}(\Rset^n)$ satisfying $\Delta^m u\leqslant 0$ in $\Rset^n$, then we have
\begin{equation}\label{wabove}
\overline u (r) \leqslant u (0) + \sum_{i=1}^{m-1} \frac{ \Delta^i u (0) r^{2i} }{\prod_{1 \leqslant k \leqslant i} \left[2k(n+2k-2)\right]}, \quad \forall\; r \geqslant 0.
\end{equation}
\end{lemma}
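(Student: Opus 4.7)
The plan is to reduce the estimate to an application of the comparison principle (Lemma \ref{comparison}) between the spherical average $\overline u$ and the polynomial on the right-hand side, viewed as a radial comparison function.

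First, I would exploit that spherical averaging commutes with the Laplacian. Indeed, differentiating under the integral sign in the definition of $\overline u$ and using Green's formula (or simply writing $\Delta$ in polar form together with Fubini) yields $\Delta \overline u(r) = \overline{\Delta u}(r)$ when $\overline u$ is regarded as a radial $C^{2m}$ function on $\Rset^n$. Iterating gives $\Delta^j \overline u = \overline{\Delta^j u}$ for every $j = 0, 1, \ldots, m$. In particular,
\[
\Delta^m \overline u = \overline{\Delta^m u} \leqslant 0 \quad \text{in } \Rset^n, \qquad \Delta^j \overline u(0) = \Delta^j u(0) \quad \text{for } 0 \leqslant j \leqslant m-1.
\]

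Next, I would introduce the radial comparison function
\[
v(r) := u(0) + \sum_{i=1}^{m-1} \frac{\Delta^i u(0)\, r^{2i}}{\prod_{k=1}^{i} \bigl[2k(n+2k-2)\bigr]}.
\]
Using the identity $\Delta(r^{2i}) = 2i(n+2i-2)\, r^{2i-2}$ in $\Rset^n$, a direct telescoping computation shows that $\Delta^j v(0) = \Delta^j u(0)$ for every $0 \leqslant j \leqslant m-1$, while $\Delta^m v \equiv 0$ on $\Rset^n$, since $v$ is a polynomial of degree $2(m-1)$.

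Finally, I would apply Lemma \ref{comparison} on each ball $B_R$ with $v_1 = v$, $v_2 = \overline u$ and the non-decreasing function $f \equiv 0$: the two preceding steps furnish $\Delta^m v = 0 \geqslant f(v)$, $\Delta^m \overline u \leqslant 0 = f(\overline u)$, and $\Delta^j v(0) = \Delta^j \overline u(0)$ for all $0 \leqslant j \leqslant m-1$. Hence $\overline u \leqslant v$ on $B_R$, and letting $R \to \infty$ yields the desired bound for every $r \geqslant 0$. The only conceptual point is the commutation of $\Delta$ with spherical means; the rest is bookkeeping on the polynomial $v$, so no serious obstacle is expected.
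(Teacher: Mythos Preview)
Your proof is correct and follows essentially the same approach as the paper: define the polynomial comparison function (the paper calls it $\Phi$, you call it $v$), check that $\Delta^m \Phi \equiv 0 \geqslant \Delta^m \overline u$ and that the initial data match at the origin, then invoke Lemma~\ref{comparison} with $f \equiv 0$. Your write-up is slightly more explicit about the commutation $\Delta^j \overline u = \overline{\Delta^j u}$ and about working on $B_R$ before letting $R\to\infty$, but these are only expository differences.
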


\begin{proof}
Let $\Phi$ be the radial function defined by
$$\Phi(r):= u (0) + \sum_{i=1}^{m-1} \frac{ \Delta^i u(0) r^{2i} }{\prod_{1 \leqslant k \leqslant i} \left[2k(n+2k-2)\right]}.$$
There hold then
$$\Delta^m \Phi \equiv 0 \geqslant \overline{\Delta^m u} = \Delta^m \overline u \quad \mbox{in } \; \Rset^n$$
and $\Delta^{i}\Phi(0) = \Delta^i u (0) = \Delta^i \overline{u}(0)$ for any $0\leqslant i \leqslant m-1$. Applying Lemma \ref{comparison} with $f\equiv 0$, there holds $\overline u \leqslant \Phi$ in $\Rset^n$.
\end{proof}

By an elementary computation involving the Gamma function, it is easy to verify that
\[
\prod_{1 \leqslant k \leqslant p} \left[2k(n+2k-2)\right]
=2^{2p} p! \Gamma \Big(p + \frac n2 \Big)/\Gamma \Big(\frac n2 \Big), \quad \forall\; p, n \in {\mathbb N}^* .
\]
Therefore, the right hand side of \eqref{wabove} is nothing but the main part of classical Pizzetti's expansion formula in \cite{Piz09}; see also Equation (8) in Nicolesco's paper \cite{Nic32}. In Pizzetti's formula, there is a last term involving $\Delta^m u$. We can remark that if $\Delta^m u\leqslant 0$ in $\Rset^n$, then the remained term in Pizzetti's formula is non-positive, which implies then \eqref{wabove}. Nevertheless, our proof of \eqref{wabove} is simple and constructive.

The following result is a simple but important fact of our approach.

\begin{lemma}\label{prow0}
	Let $m\geqslant 1$. Then we have the following claims:
\begin{itemize}	
 \item [(i)] If $u$ be a positive function satisfying $\Delta^m u<0$ in $\Rset^n$, then $\Delta^{m-1} u>0$ in $\Rset^n$.
	
 \item [(ii)] If $u$ be a non-negative function satisfying $\Delta^m u\leqslant 0$ in $\Rset^n$, then $\Delta^{m-1} u\geqslant 0$ in $\Rset^n$.
\end{itemize}
\end{lemma}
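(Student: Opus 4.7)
The plan is to combine the Pizzetti-type upper bound of Lemma \ref{lemprioriestimate} with the non-negativity of $u$ to pin down the sign of the top coefficient $\Delta^{m-1}u$, and then to upgrade non-negativity to strict positivity by the strong maximum principle. The case $m=1$ is trivial since the statement reduces to the hypothesis on $u$, so assume $m \geqslant 2$.

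For statement (ii), I would fix an arbitrary $x_0 \in \Rset^n$ and apply Lemma \ref{lemprioriestimate} to the translate $\tilde u(\cdot) = u(\cdot + x_0)$, which inherits both $\tilde u \geqslant 0$ and $\Delta^m \tilde u \leqslant 0$ from $u$. This gives
\[
0 \leqslant \overline{\tilde u}(r) \leqslant u(x_0) + \sum_{i=1}^{m-1} \frac{\Delta^i u(x_0)\,r^{2i}}{\prod_{1\leqslant k\leqslant i}\left[2k(n+2k-2)\right]}, \quad \forall\, r \geqslant 0.
\]
The right-hand side is a polynomial in $r$ of degree at most $2(m-1)$ whose leading coefficient is a strictly positive multiple of $\Delta^{m-1}u(x_0)$. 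Since this polynomial must stay $\geqslant 0$ as $r \to \infty$, its leading coefficient cannot be negative; hence $\Delta^{m-1}u(x_0) \geqslant 0$. As $x_0$ was arbitrary, we conclude $\Delta^{m-1}u \geqslant 0$ in $\Rset^n$.

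For statement (i), the hypotheses are stronger than those of (ii), so $w := \Delta^{m-1}u \geqslant 0$ already. The remaining task is to rule out $w(x_1)=0$ at any single point. Setting $w := \Delta^{m-1}u$, we have $w \geqslant 0$ and $\Delta w = \Delta^m u < 0$ strictly in $\Rset^n$, so $w$ is strictly superharmonic. If $w$ vanished at some $x_1$, it would attain its global infimum there at an interior point, and the strong maximum principle for superharmonic functions would force $w \equiv 0$, contradicting $\Delta w < 0$. Hence $w > 0$ throughout $\Rset^n$.

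The main obstacle to highlight is the last step: the strictness of the inequality $\Delta^m u < 0$ in part (i) is genuinely needed, because under only $\Delta^m u \leqslant 0$ the function $w=\Delta^{m-1}u$ could legitimately be identically zero (as happens for harmonic $u$). All other steps reduce to translation invariance and elementary polynomial growth analysis combined with the already-stated Lemma~\ref{lemprioriestimate}.
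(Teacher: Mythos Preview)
Your proof is correct and follows essentially the same approach as the paper: for (ii), both use Lemma~\ref{lemprioriestimate} after translation and read off the sign of $\Delta^{m-1}u$ from the leading coefficient of the polynomial bound; for (i), both deduce $w:=\Delta^{m-1}u\geqslant 0$ from (ii) and then rule out $w(x_1)=0$. The only difference is that in (i) the paper argues more locally: at a minimum point $x_1$ of $w$ one must have $\Delta w(x_1)\geqslant 0$, which directly contradicts $\Delta w(x_1)<0$; your appeal to the strong maximum principle is equally valid but slightly heavier than needed.
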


\begin{proof}
Consider first the claim (ii). Set $w = \Delta^{m-1}u$, suppose that there was a point $x_0 \in \Rset^n$ such that $w(x_0) <0$. By translation, we may assume that $x_0=0$.
Moreover, it follows from Lemma~\ref{lemprioriestimate} that $u$ satisfies the estimate \eqref{wabove}. As $\Delta^{m-1}u(0)<0$, there holds $\overline u(r) <0$ for $r$ large enough. This is impossible because $u$ is non-negative in $\Rset^n$. The point (ii) holds true.

Now we consider (i). Set again $w = \Delta^{m-1}u$, we have $w \geqslant 0$ in $\Rset^n$ by (ii). If $w$ vanishes at some $x_1 \in \Rset^n$, then $w$ attains its minimum at $x_1$. However, this contradicts the fact that $\Delta w(x_1)<0$, we are done.
\end{proof}

It is worth noting that without the non-negativity of $u$, in general, the result of Lemma \ref{prow0} does not hold. For example, it was shown in \cite[Lemma 7.8]{FF16} that there are infinitely many entire radial solutions to $\Delta^{2k+1} u = -e^{u}$ for which $\Delta^{2k} u$ changes sign.

The following Liouville type result is a crucial step in the proof of Proposition~\ref{proalpha>1evenminus}.

\begin{lemma}\label{lem_general_Liouville}
Assume that $u$ is a $C^{2m}$ non-negative function in $\Rset^n$, verifying $(-\Delta)^m u \leqslant 0$ in $\Rset^n$ and
\begin{align}\label{test_initial}\int_{B_R} u dx = o(R^n), \quad \mbox{as } \; R \to \infty.
\end{align}
Then $u \equiv 0$ in $\Rset^n$.
\end{lemma}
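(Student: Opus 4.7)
My plan is to establish, by downward induction on $j$, that $(-\Delta)^{j}u\leqslant 0$ in $\Rset^{n}$ for every $1\leqslant j\leqslant m$; the base case $j=m$ is the hypothesis, and the output at $j=1$ says that $u$ is subharmonic. Once $u$ is non-negative and subharmonic, the classical sub-mean-value inequality combined with $B_{R}(x)\subset B_{R+|x|}(0)$ yields
\[
u(x)\,\leqslant\,\frac{1}{|B_{R}|}\int_{B_{R}(x)}u\,dy\,\leqslant\,\frac{1}{|B_{R}|}\int_{B_{R+|x|}(0)}u\,dy\,=\,o(1)\quad\text{as }R\to\infty,
\]
so $u(x)=0$ for every $x$ and the lemma follows.

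The inductive step from $j+1$ to $j$ splits according to the parity of $j$. If $j$ is odd then $(-\Delta)^{j+1}u\leqslant 0$ reads $\Delta^{j+1}u\leqslant 0$, so Lemma~\ref{prow0}(ii) (with $m$ replaced by $j+1$, applied at every point) gives $\Delta^{j}u\geqslant 0$, that is $(-\Delta)^{j}u\leqslant 0$. If $j$ is even then $(-\Delta)^{j+1}u\leqslant 0$ reads $\Delta^{j+1}u\geqslant 0$; Lemma~\ref{lemprioriestimate} applied to $-u$ with exponent $j+1$, centred at an arbitrary $x_{0}\in\Rset^{n}$, then gives, after flipping signs, the reverse Pizzetti-type inequality
\[
\overline{u}_{x_{0}}(r)\,\geqslant\,u(x_{0})+\sum_{i=1}^{j}c_{i}\,\Delta^{i}u(x_{0})\,r^{2i},\qquad r\geqslant 0,
\]
with explicit positive constants $c_{i}$. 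Multiplying by $|\partial B_{r}|$, integrating over $[0,R]$ and using $\int_{B_{R}(x_{0})}u\,dy\leqslant\int_{B_{R+|x_{0}|}(0)}u\,dy=o(R^{n})$ produces
\[
\sum_{i=0}^{j}a_{i}\,\Delta^{i}u(x_{0})\,R^{n+2i}\,\leqslant\,o(R^{n}),
\]
with positive coefficients $a_{i}>0$ and the convention $\Delta^{0}u=u$. Dividing by $R^{n+2j}$ and sending $R\to\infty$ forces $\Delta^{j}u(x_{0})\leqslant 0$; since $x_{0}$ was arbitrary, $(-\Delta)^{j}u=\Delta^{j}u\leqslant 0$ in all of $\Rset^{n}$, closing the induction.

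The only delicate point is the parity bookkeeping: Lemma~\ref{prow0}(ii) requires $\Delta^{j+1}u\leqslant 0$, while the reverse Pizzetti argument needs $\Delta^{j+1}u\geqslant 0$, and these two alternatives match up exactly with $j$ odd and $j$ even thanks to the extra $(-1)^{j+1}$ hidden inside $(-\Delta)^{j+1}$. Once this is tracked correctly the induction closes without further issue; one only has to verify that the coefficient of $r^{2j}$ in the Pizzetti expansion is strictly positive so that the $o(R^{n})$ decay really does pin down the sign of $\Delta^{j}u(x_{0})$.
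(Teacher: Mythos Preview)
Your proof is correct and shares the paper's overall architecture: backward induction on $j$ to show $(-\Delta)^{j}u\leqslant 0$ for all $j$, with the odd step handled by Lemma~\ref{prow0}(ii) in both cases. The difference lies in the even step. The paper argues by contradiction: assuming $v_{j}(x_{0})>0$, it uses that $\overline{v_{j}}$ is non-decreasing (since $\Delta\overline{v_{j}}\geqslant 0$) together with a cutoff and $j$-fold integration by parts to bound $\int_{B_R}v_{j}$ above by $CR^{-2j}\int_{B_{2R}}u$, which forces $v_{j}(x_{0})\leqslant 0$. You instead apply the reverse Pizzetti inequality (Lemma~\ref{lemprioriestimate} for $-u$) directly, obtaining a polynomial lower bound for $\overline{u}_{x_{0}}(r)$ whose leading coefficient is $c_{j}\Delta^{j}u(x_{0})$, and then kill it with the $o(R^{n})$ hypothesis. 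Your route avoids the cutoff and the integration by parts at the cost of carrying the lower-order Pizzetti terms, which you correctly dispose of by dividing through by $R^{n+2j}$; the paper's route isolates the top-order information via the test function. Both are short and natural; yours is perhaps slightly more self-contained since it only uses Lemmas~\ref{lemprioriestimate} and \ref{prow0} and never introduces a cutoff.

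One cosmetic remark: the phrase ``applied at every point'' after Lemma~\ref{prow0}(ii) is superfluous, since that lemma is already a global statement. Also, your final sub-mean-value step is simply the $j=0$ instance of your own even-case argument, so you could equally well run the induction down to $j=0$ as the paper does and conclude $u\leqslant 0$ directly.
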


\begin{proof}
 Let $v_k=(-\Delta)^k u$, for $0\leqslant k \leqslant m$. We shall prove by backward induction that
\begin{align}\label{vh}
v_k\leqslant 0 \;\; \mbox{in }\;\Rset^n
\end{align}	
for $k = m, m-1, \ldots, 0$. It is obvious that \eqref{vh} is true for $k=m$. Suppose now \eqref{vh} is true for $j+1 \leqslant k \leqslant m$ with some $j\geqslant 0$, we shall show that $v_j\leqslant 0$ in $\Rset^n$. We have two possible cases.

\noindent{\sl Case 1: $j$ is odd}. As $\Delta^{j+1}u = v_{j+1}\leqslant 0$, Lemma~\ref{prow0}(ii) gives $v_j\leqslant 0$.

\noindent{\sl Case 2: $j$ is even}. We will prove $v_j\leqslant 0$. By way of contradiction, assume that there exists some $x_0 \in \Rset^n$ such that $v_j(x_0)>0$. Up to a translation, we may further assume that $x_0=0$. Then $$\Delta \overline v_j = \overline{\Delta v_j} = -\overline v_{j+1} \geqslant 0 \quad \mbox{in }\; \Rset^n.$$
We have then $\overline v_j '(r) \geqslant 0$ for any $r \geqslant 0$, hence $\overline v_j(r) \geqslant \overline v_j(0) = v_j (0) >0.$ Let $\psi$ be a smooth, radial, cut-off function satisfying $0\leqslant \psi\leqslant 1$ and
\begin{align}
\label{psi}
\psi(x)=
\begin{cases}
0 &\text{ if } |x|\geqslant 2,\\
1 &\text{ if } |x|\leqslant 1.
\end{cases}
\end{align}
For any $R>0$, we can estimate
\begin{equation}\label{test4}
\begin{aligned}
\int_{\Rset^n}v_j(x)\psi\left(\frac{x}{R}\right) dx & \geqslant \int_{B_R}v_j(x) dx \\
& = |\mathbb S^{n-1}| \int_0^R \overline v_j(r) r^{n-1} dr
\geqslant \frac 1n  |\mathbb S^{n-1}| \overline{v}_j(0)R^n .
\end{aligned}
\end{equation}
On the other hand, there holds
\begin{align}
\label{test5}
\begin{split}
\int_{\Rset^n}v_j(x)\psi\left(\frac{x}{R}\right) dx & = \int_{\Rset^n}(-\Delta)^j u(x)\psi\left(\frac{x}{R}\right) dx\\ &= R^{-2j}\int_{\Rset^n}\ u(x)(-\Delta)^j\psi\left(\frac{x}{R}\right) dx\\
& \leqslant CR^{-2j} \int_{B_{2R}}u(x) dx.
\end{split}
\end{align}
Putting \eqref{test4} and \eqref{test5} together gives
\begin{align*}
0 < \overline{v}_j(0)\leqslant CR^{-2j-n} \int_{B_{2R}}u(x) dx.
\end{align*}
Letting $R\to \infty$ and using \eqref{test_initial} we meet a contradiction. We get then $v_j\leqslant 0$ in $\Rset^n$. Therefore, by induction principle, \eqref{vh} is true as claimed. Taking $k=0$ in \eqref{vh}, we have $u \leqslant 0$ in $\Rset^n$, hence $u\equiv 0$ in $\Rset^n$.
\end{proof}

The last result in this subsection is a classical interpolation-type estimate, which plays an important role in our proof of the non-existence result for $\Delta^m u = - u^\alpha$ with $0<\alpha<1$; see the proof of Proposition \ref{proalpha>0}.

\begin{lemma}\label{lem6}
	Let $m$ be a positive integer. Let $z$ be a function in $W^{2m,\ell}(B_{2R})$ for some $\ell>1$. Then for any exponent $q>1$ such that
\begin{align}\label{exponentq}
	\frac{1}{q}\geqslant \frac{1}{\ell}-\frac{2m}{n},
\end{align}
there holds
\begin{align*}
	\Big(\int_{B_R}z^{q}dx\Big)^{1/q}&\leqslant CR^{\frac{n}{q}+2m-\frac{n}{\ell}}\Big(\int_{B_{2R}}|\Delta ^mz|^{\ell}dx\Big)^{1/\ell} +CR^{\frac{n}{q}-n}\int_{B_{2R}}zdx,
\end{align*}
where $C=C(m, n,\ell,q)$.
\end{lemma}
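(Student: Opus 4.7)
The plan is to first establish the scale-invariant version of the inequality (the case $R=1$) and then to recover the general statement via a routine scaling argument. Setting $\tilde z(y) := z(Ry)$ for $y \in B_2$, a direct computation shows that
\[
\|\tilde z\|_{L^q(B_1)} = R^{-n/q}\|z\|_{L^q(B_R)}, \quad \|\tilde z\|_{L^1(B_2)} = R^{-n}\|z\|_{L^1(B_{2R})},
\]
together with $\|\Delta^m \tilde z\|_{L^\ell(B_2)} = R^{2m-n/\ell}\|\Delta^m z\|_{L^\ell(B_{2R})}$; the three powers of $R$ appearing in the conclusion are exactly those dictated by this change of variables. Hence the statement reduces to proving the dimensionless estimate
\[
\|w\|_{L^q(B_1)} \leq C\bigl(\|\Delta^m w\|_{L^\ell(B_2)} + \|w\|_{L^1(B_2)}\bigr)
\]
for every $w \in W^{2m,\ell}(B_2)$, with a constant $C = C(m,n,\ell,q)$.

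Next, I would prove this dimensionless estimate by combining three classical ingredients. First, under the hypothesis \eqref{exponentq}, the Sobolev embedding theorem yields $W^{2m,\ell}(B_r) \hookrightarrow L^q(B_r)$ for each $r \in (1,2]$. Second, the standard interior $L^\ell$ regularity for the polyharmonic operator (a Calder\'on--Zygmund type estimate, see for instance the monograph \cite{GGS10}) supplies, for any nested balls $B_{r_1} \Subset B_{r_2} \subset B_2$,
\[
\|w\|_{W^{2m,\ell}(B_{r_1})} \leq \frac{C}{(r_2-r_1)^{2m}}\bigl(\|\Delta^m w\|_{L^\ell(B_{r_2})} + \|w\|_{L^\ell(B_{r_2})}\bigr).
\]
Third, the Gagliardo--Nirenberg interpolation inequality on $B_2$ gives, for every $\varepsilon>0$,
\[
\|w\|_{L^\ell(B_2)} \leq \varepsilon\,\|w\|_{W^{2m,\ell}(B_2)} + C_\varepsilon\|w\|_{L^1(B_2)},
\]
which is licit since $\ell>1$ makes the interpolation parameter strictly less than $1$. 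Chaining these three facts together on a triad $B_1 \Subset B_{r_1} \Subset B_{r_2} \Subset B_2$ produces a first-pass inequality of the form
\[
\|w\|_{L^q(B_1)} \leq C\|\Delta^m w\|_{L^\ell(B_2)} + \varepsilon C\|w\|_{W^{2m,\ell}(B_2)} + C\|w\|_{L^1(B_2)}.
\]

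The hard part will be absorbing the term involving $\|w\|_{W^{2m,\ell}(B_2)}$, since the elliptic regularity bound only controls the $W^{2m,\ell}$-norm on a strictly smaller ball than the one carrying the data. My plan is to run the same chain of estimates on the parametric family of nested balls $B_{\rho_1} \Subset B_{\rho_2}$ with $1 \leq \rho_1 < \rho_2 \leq 2$, obtaining for $\Phi(\rho) := \|w\|_{W^{2m,\ell}(B_\rho)}$ a bound of the form
\[
\Phi(\rho_1) \leq \tfrac12\Phi(\rho_2) + \frac{C}{(\rho_2-\rho_1)^{\beta}}\bigl(\|\Delta^m w\|_{L^\ell(B_2)} + \|w\|_{L^1(B_2)}\bigr),
\]
after choosing $\varepsilon$ sufficiently small. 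The classical iteration lemma on almost-monotone functions of the radius (the standard device used to turn Campanato-type inequalities into absolute bounds) then yields $\Phi(1) \leq C\bigl(\|\Delta^m w\|_{L^\ell(B_2)} + \|w\|_{L^1(B_2)}\bigr)$. Combined with the Sobolev embedding $W^{2m,\ell}(B_1) \hookrightarrow L^q(B_1)$, this completes the proof of the dimensionless estimate and, via the initial scaling, of the lemma.
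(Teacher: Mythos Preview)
Your reduction by scaling to the unit-ball estimate $\|w\|_{L^q(B_1)} \leq C\bigl(\|\Delta^m w\|_{L^\ell(B_2)} + \|w\|_{L^1(B_2)}\bigr)$ is exactly the paper's approach; the paper then simply invokes \cite[Theorem~2.20]{GGS10} for this inequality, whereas you supply a self-contained proof via interior Calder\'on--Zygmund regularity, Gagliardo--Nirenberg interpolation, and the standard radius-iteration absorption lemma. Your argument is correct; the one point worth making explicit is that in the iteration step $\varepsilon$ must be chosen as a specific power of $\rho_2-\rho_1$ (using the polynomial dependence $C_\varepsilon \sim \varepsilon^{-\theta/(1-\theta)}$ coming from Gagliardo--Nirenberg and Young's inequality), not merely ``sufficiently small'', so that the resulting bound genuinely has the form $\Phi(\rho_1)\leq \tfrac12\Phi(\rho_2)+C(\rho_2-\rho_1)^{-\beta}A$ required by the iteration lemma.
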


\begin{proof}
By the dilation $w(x)=z(Rx)$, we obtain
\[
\int_{B_R}z^{q}dx = R^{n} \int_{B_1} w^{q}dx, \quad \int_{B_{2R}}zdx= R^n \int_{B_{2}} w dx,
\]
and
\[
\int_{B_{2R}}|\Delta ^mz|^{\ell}dx= R^{-2m \ell + n} \int_{B_2}|\Delta ^m w|^{\ell}dx.
\]
From these identities, the desired inequality is equivalent to
\begin{align*}
	\|w\|_{L^q(B_1)}\leqslant C\|\Delta^m w\|_{L^\ell(B_2)}+ C \|w\|_{L^1(B_2)}
\end{align*}
for $w\in W^{2m,\ell}(B_{2})$. However, this follows from \eqref{exponentq} and standard elliptic estimate; see for instance \cite[Theorem 2.20]{GGS10}. The lemma is proved.
\end{proof}	

\section{Proof of the main results}
\label{Proofs}

This section is devoted to the proof of our main results. We prove some Liouville type results in subsections \ref{subsec-ProofWithMinusSign} and \ref{subnew1}, while some existence results are proved in subsection \ref{subnew2}. It is worth noticing that for each case in Tables \ref{tabevenminus} and \ref{tabevenplus} above, we have already included the name of the main proposition yielding the result in the case. Therefore, there is no need to write a proof for Theorems \ref{thmMinus} and \ref{thmPlus}.

\subsection{Non-existence results for $\Delta^m u=-u^\alpha$}
\label{subsec-ProofWithMinusSign}

This subsection is devoted to the non-existence results in Theorem \ref{thmMinus}, and we do not consider specially the situations under applications of Propositions \ref{propA} and \ref{propB}.

\subsubsection{For dimensions 1 and 2}
\label{subsubsec-MinusSign-NE-n=1,2}

Let us start with the case $n \leqslant 2$ and this corresponds to the second and fifth rows in Table \ref{tabevenminus}. We will prove that in dimension one and two, the equation
\begin{align}
\label{negative}
\Delta^m u = -u^\alpha \quad \mbox{in }\; \Rset^n
\end{align}
has no positive solution for any $\alpha \in \Rset$; and has no non-trivial non-negative solution for any $\alpha \geqslant 0$. In fact, these claims are trivial consequence of the following result.

\begin{proposition}\label{pron=1,2}
Let $m$ be a positive integer and $n \leqslant 2$. If $u$ is a non-negative $C^{2m}$ function verifying $\Delta^m u \leqslant 0$ in $\Rset^n$, then $\Delta^m u \equiv 0$ in $\Rset^n$.
\end{proposition}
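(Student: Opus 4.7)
The plan is to reduce the higher-order inequality $\Delta^m u \leqslant 0$ to a first-order one via Lemma \ref{prow0}, and then invoke the classical Liouville property for non-negative superharmonic functions in dimension at most $2$. Setting $v := \Delta^{m-1} u$, the hypotheses $u \geqslant 0$ and $\Delta v = \Delta^m u \leqslant 0$ together with Lemma \ref{prow0}(ii) give $v \geqslant 0$ on $\Rset^n$. Thus $v$ is a non-negative superharmonic function on $\Rset^n$, and the desired conclusion $\Delta^m u \equiv 0$ is equivalent to $\Delta v \equiv 0$.

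For $n = 1$, $v'' \leqslant 0$ makes $v$ concave, so $v'$ is non-increasing; if $v$ were non-constant then $v'(x_0) \neq 0$ at some $x_0$, and monotonicity of $v'$ would force $v \to -\infty$ on one side, contradicting $v \geqslant 0$. Hence $v'' \equiv 0$.

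For $n = 2$, I would employ a logarithmic cutoff tailored to the critical dimension. Pick a smooth $\eta : [0, \infty) \to [0, 1]$ with $\eta \equiv 1$ on $[0, 1/2]$ and $\eta \equiv 0$ on $[1, \infty)$, and set $\psi_R(x) := \eta(\log(|x|/2)/\log R)$ for $|x| \geqslant 2$, $\psi_R \equiv 1$ on $B_2$. A direct polar-coordinate computation gives $\Delta \psi_R(x) = \eta''(s)/(|x|^2 (\log R)^2)$ with $s = \log(|x|/2)/\log R$, so $|\Delta \psi_R| \leqslant C |x|^{-2} (\log R)^{-2}$ on its support, which lies in the annulus $\{2\sqrt{R} \leqslant |x| \leqslant 2R\}$. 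Integrating $-\Delta v \geqslant 0$ against $\psi_R$ and invoking the classical sub-mean-value property $\overline{v}(r) \leqslant v(0)$ for non-negative superharmonic $v$ yields
\[
0 \leqslant \int_{\Rset^2} (-\Delta v) \psi_R dx = -\int_{\Rset^2} v \Delta \psi_R dx \leqslant \frac{C v(0)}{(\log R)^2} \int_{2\sqrt{R}}^{2R} \frac{dr}{r} = O\Big(\frac{1}{\log R}\Big).
\]
Since $\psi_R \to 1$ pointwise as $R \to \infty$, Fatou's lemma forces $\int_{\Rset^2} (-\Delta v) dx = 0$, so $\Delta v \equiv 0$.

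The main obstacle is the critical dimension $n = 2$: a standard polynomial cutoff $\psi(x/R)$ yields only an $O(1)$ bound since the natural growth $\int_{B_R} v = O(R^2)$ exactly balances the $R^{-2}$ factor from $\Delta \psi$. The logarithmic cutoff, specific to $n = 2$, provides the extra $(\log R)^{-1}$ factor needed to close the argument; the reduction from order $m$ to order $1$ via Lemma \ref{prow0} and the case $n = 1$ are then immediate.
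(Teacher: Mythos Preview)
Your proof is correct and follows the same strategy as the paper: reduce via Lemma~\ref{prow0}(ii) to the fact that $v = \Delta^{m-1} u$ is a non-negative superharmonic function on $\Rset^n$ with $n \leqslant 2$, and conclude $\Delta v \equiv 0$. The only difference is that the paper simply cites the classical Liouville theorem for non-negative superharmonic functions in dimension $\leqslant 2$ (referring to \cite{Far07} for $n=2$), whereas you supply a self-contained argument---concavity for $n=1$ and a logarithmic cutoff for $n=2$---to reach the same conclusion.
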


\begin{proof}
As $\Delta^m u \leqslant 0$ in $\Rset^n$, Lemma~\ref{prow0}(ii) shows that $\Delta^{m-1} u =: w \geqslant 0$. This means that $w$ is a non-negative, super-harmonic function in $\Rset^n$. It is well-known that such $w$ must be constant in $\Rset^n$ if $n \leqslant 2$; see \cite[Theorem 3.1]{Far07} for the case $n=2$. Hence, $\Delta^m u = \Delta w = 0$ everywhere.
\end{proof}

\subsubsection{For negative values of $\alpha$}

Here we prove the non-existence of positive solution to \eqref{negative} for $n \geqslant 3$ and suitable negative values of $\alpha$.

\begin{proposition}\label{proalpha<0}
Let $n \geqslant 3$. Then the equation \eqref{negative} has no positive solution for any $\alpha \in [-1/(m-1), 0)$ if $m > 1$ or for any $\alpha < 0$ if $m = 1$.
\end{proposition}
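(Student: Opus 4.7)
The plan is to reduce to a radial problem via spherical averaging, to exploit the convexity of $t\mapsto t^\alpha$ on $(0,\infty)$ for $\alpha<0$ in order to preserve the differential inequality through this averaging, and finally to contradict the strict positivity of $\Delta^{m-1}\overline{u}$ guaranteed by Lemma~\ref{prow0}(i) by forcing this quantity to tend to $-\infty$ at infinity.

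First I would set $v:=\overline{u}$. Since $\alpha<0$, one has $\alpha(\alpha-1)t^{\alpha-2}>0$, so $t\mapsto t^\alpha$ is convex on $(0,\infty)$; Jensen's inequality on each sphere $\partial B_r$ gives $\overline{u^\alpha}(r)\geq v(r)^\alpha$, and averaging $\Delta^m u=-u^\alpha$ yields the radial inequality
\[
\Delta^m v \leq -v^\alpha < 0 \quad \text{in } \Rset^n.
\]
Applying Lemma~\ref{lemprioriestimate} to the positive radial function $v$ produces the Pizzetti-type ceiling $v(r)\leq C(1+r^{2(m-1)})$ (reducing to $v(r)\leq v(0)$ when $m=1$). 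Because $\alpha<0$, this upper bound inverts to a pointwise lower bound $v(r)^\alpha\geq c\, r^{2(m-1)\alpha}$ for $r\geq 1$.

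Next I would set $w:=\Delta^{m-1}v$; Lemma~\ref{prow0}(i) gives $w>0$ when $m\geq 2$, while $w=v>0$ when $m=1$. Writing $\Delta w=\Delta^m v\leq -v^\alpha$ in radial coordinates and integrating from $0$ to $r$ (using $w'(0)=0$), I obtain
\[
r^{n-1}w'(r) \leq -\int_0^r s^{n-1}v^\alpha(s)\,ds \leq -c\int_1^r s^{n-1+2(m-1)\alpha}\,ds.
\]
The hypothesis $\alpha\geq -1/(m-1)$ (with the convention $-1/0=-\infty$ when $m=1$) ensures $n-1+2(m-1)\alpha\geq n-3\geq 0$ for $n\geq 3$, so this integral grows at least like $r^{n+2(m-1)\alpha}$, and hence $w'(r)\leq -c'\,r^{1+2(m-1)\alpha}$ for large $r$ with exponent $\geq -1$. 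A second integration then forces $w(r)\to -\infty$ — polynomially if $\alpha>-1/(m-1)$, logarithmically if $\alpha=-1/(m-1)$ — contradicting the positivity of $w$ in either case.

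The main obstacle is precisely the critical exponent $\alpha=-1/(m-1)$: the polynomial growth rate $r^{2(m-1)}$ furnished by Lemma~\ref{lemprioriestimate} matches exactly the rate needed for the double integral of $v^\alpha$ to diverge only logarithmically, so the final contradiction is merely of order $\log r$. The argument closes only because the Pizzetti-type ceiling is sharp at the top order $r^{2(m-1)}$; consistent with this, Proposition~\ref{propA} produces positive solutions as soon as $\alpha<-1/(m-1)$, confirming the optimality of the threshold.
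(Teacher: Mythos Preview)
Your proof is correct and follows essentially the same route as the paper's: Pizzetti-type upper bound on $\overline u$ from Lemma~\ref{lemprioriestimate}, Jensen's inequality for the convex map $t\mapsto t^\alpha$, positivity of $\Delta^{m-1}$ of the (averaged) solution from Lemma~\ref{prow0}(i), and two successive radial integrations of $\Delta w\leq -\overline u^{\,\alpha}$ to force $w\to -\infty$, with the critical case $\alpha=-1/(m-1)$ yielding logarithmic divergence. The only cosmetic difference is that you average first and work with $v=\overline u$ and $w=\Delta^{m-1}v$, whereas the paper keeps $u$ and $w=\Delta^{m-1}u$ and averages as needed; since averaging commutes with $\Delta$, the two are equivalent.
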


\begin{proof}
Assume that $n\geqslant 3$, $m \geqslant 1$, and $\alpha \in [-1/(m-1), 0]\cap \Rset$. By way of contradiction, suppose that $u$ is a positive solution to \eqref{negative}. Using Lemma~\ref{lemprioriestimate}, we have
\[
\overline u (r) \leqslant u (0) + \sum_{i=1}^{m-1} \frac{ \Delta^i u(0) r^{2l} }{\prod_{1 \leqslant k \leqslant i} [2k(n+ 2k - 2)] }, \quad \forall\; r \geqslant 0.
\]	
Hence, there exists a constant $C>0$ such that
	\begin{align*}
	\overline{u}(r)\leqslant Cr^{2(m-1)} \quad \mbox{for any $r\geqslant 1$}.
	\end{align*}
Set $w = \Delta^{m-1} u$. By Lemma~\ref{prow0}(i), there holds $w > 0$. Moreover, as the map $t \mapsto t^\alpha$ is convex in $(0, \infty)$ when $\alpha \leqslant 0$, Jensen's inequality implies
$$-\Delta \overline w = \overline{u^\alpha} \geqslant \overline{u}^\alpha \quad \mbox{in }\; \Rset^n,$$
so that
	\begin{align*}
	-\big(r^{n-1}\overline{w}'(r)\big)' \geqslant r^{n-1}\overline{u}^\alpha(r) \geqslant Cr^{n-1+2(m-1)\alpha}, \quad \forall\; r \geqslant 1.
	\end{align*}
Integrating over $(1,r)$, taking into account $n\geqslant 3$ and $(m-1)\alpha \geqslant -1$, there holds
	\begin{align*}
	\overline{w}'(1)-r^{n-1}\overline{w}'(r)&\geqslant Cr^{2(m-1)\alpha+n}-C.
	\end{align*}
	Therefore,
\begin{equation}\label{for9}
	\overline{w}'(r) \leqslant -Cr^{2(m-1)\alpha+1}+Cr^{-n+1}, \quad \forall\; r \geqslant 1.
\end{equation}
If $\alpha\in(-1/(m-1),0)$, then integrating \eqref{for9} over $[1,r]$ gives
	\begin{align*}
	\overline{w}(r)-\overline{w}(1)\leqslant -Cr^{2(m-1)\alpha+2}+C, \quad \forall\; r \geqslant 1.
	\end{align*}	
We have then $\overline{w}(r)\to -\infty$ as $r\to\infty$, which is a contradiction with $w>0$. If $\alpha=-1/(m-1)$, then integrating of \eqref{for9} over $[1,r]$ gives
	\begin{align*}
	\overline{w}(r)-\overline{w}(1)\leqslant -C\int_{1}^{r}r^{-1}dr+ \int_{1}^{r} Cr^{-n+1}dr =-C\ln r +C,
	\end{align*}
which also implies that $\overline{w}(r)\to -\infty$ as $r\to\infty$. We reach again a contradiction.
\end{proof}


\subsubsection{For $0 \leqslant \alpha \leqslant 1$}

Now we turn to the case of non-negative, sublinear $\alpha$. The following non-existence result is one of the main contributions of this paper.

\begin{proposition}\label{proalpha>0}
For any $n \geqslant 1$, $m\geqslant 1$, and $\alpha \in [0, 1]$, the equation \eqref{negative} has no non-trivial, non-negative solution.
\end{proposition}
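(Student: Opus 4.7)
The plan is to combine a Pizzetti-type a priori growth bound, a test-function integral estimate, and the polyharmonic interpolation inequality of Lemma \ref{lem6} into a Moser-style iteration, driving the integrability of $u$ down to the regime where Lemma \ref{lem_general_Liouville} (or a suitable analog) applies.

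First, Lemma \ref{prow0}(ii) applied to $\Delta^m u \leq 0$ and $u \geq 0$ gives $w := \Delta^{m-1}u \geq 0$, and Lemma \ref{lemprioriestimate} then yields
\[
\overline u(r) \leq C\bigl(1 + r^{2(m-1)}\bigr), \qquad \forall\, r \geq 0,
\]
so that $\int_{B_R} u\, dx \leq CR^{n+2(m-1)}$ for $R \geq 1$. On the other hand, testing $\Delta^m u = -u^\alpha$ against $\psi(\cdot/R)^\mu$ (with $\psi$ as in \eqref{psi} and $\mu \geq 2m$ large), and integrating by parts via the pointwise bound $|\Delta^m \psi(\cdot/R)^\mu| \leq CR^{-2m}\psi(\cdot/R)^{\mu-2m}$, produces
\[
\int_{B_R} u^\alpha\, dx \leq CR^{-2m}\int_{B_{2R}} u\, dx \leq CR^{n-2}.
\]

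I would then H\"older-interpolate between these two bounds to obtain $\int_{B_R} u^p\, dx \leq CR^{\gamma(p)}$ for every $p \in [\alpha, 1]$, with $\gamma$ linearly interpolating from $\gamma(\alpha) = n-2$ to $\gamma(1) = n + 2(m-1)$. Applying Lemma \ref{lem6} with $\ell \in (1, 1/\alpha]$ and $q > 1$ satisfying $1/q \geq 1/\ell - 2m/n$, and using $\|\Delta^m u\|_{L^\ell(B_{2R})} = \bigl(\int u^{\alpha\ell}\bigr)^{1/\ell}$ together with the interpolation bound just derived, I get
\[
\Bigl(\int_{B_R} u^q\, dx\Bigr)^{1/q} \leq CR^{n/q + 2m - n/\ell} \Bigl(\int_{B_{2R}} u^{\alpha\ell}\, dx\Bigr)^{1/\ell} + CR^{n/q - n}\int_{B_{2R}} u\, dx \leq CR^{\Gamma(q)},
\]
with exponent $\Gamma(q)$ strictly improving upon the naive extrapolation of $\gamma$. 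Feeding this sharper high-$q$ estimate back through the test-function inequality via H\"older strictly lowers the exponent in the bound for $\int_{B_R} u^\alpha\, dx$, hence also for $\int_{B_R} u\, dx$. Because $\alpha < 1$ makes the iteration contractive, successive rounds force
\[
\int_{B_R} u\, dx = o(R^n) \qquad \text{as } R \to \infty.
\]

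For $m$ even we then have $(-\Delta)^m u = \Delta^m u \leq 0$, so Lemma \ref{lem_general_Liouville} immediately gives $u \equiv 0$, contradicting the non-triviality. For $m$ odd, $(-\Delta)^m u = u^\alpha \geq 0$ and Lemma \ref{lem_general_Liouville} is not directly applicable to $u$; in that case I would re-run the same scheme on the non-negative super-harmonic function $w = \Delta^{m-1}u$ (since $\Delta w = -u^\alpha \leq 0$), deduce $\overline w(r) \to 0$ as $r \to \infty$, and then use the representation $w(x) = c_n \int |x-y|^{2-n}u^\alpha(y)\, dy + h(x)$ together with the Liouville theorem for non-negative harmonic functions to conclude that the constant $h$ vanishes and $u^\alpha \equiv 0$, whence $u \equiv 0$. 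The main obstacle is the convergence of the Moser iteration: one must verify at each step that Lemma \ref{lem6} combined with H\"older interpolation produces a \emph{strict} improvement of the exponent, and that the resulting sequence of exponents actually converges to a value strictly less than $n$. The delicate balance between the scaling factor $R^{n/q + 2m - n/\ell}$ and the sublinear self-similarity $\|u^\alpha\|_{L^\ell} = \|u\|_{L^{\alpha\ell}}^\alpha$ is the technical crux, and the hypothesis $\alpha \leq 1$ is precisely what makes this iteration contractive.
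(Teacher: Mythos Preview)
Your ingredients are right --- the Pizzetti growth bound, the test-function estimate $\int_{B_R}u^\alpha\leqslant CR^{-2m}\int_{B_{2R}}u$, the interpolation Lemma \ref{lem6}, and the final H\"older step are exactly what the paper uses. But you are missing one key device that the paper introduces precisely to close the loop you describe: a \emph{doubling sequence}. Since $F(R):=\int_{B_{2R}}u\leqslant CR^{n+2m-2}$ has polynomial growth, there exist $M>0$ and $R_i\to\infty$ with $F(2R_i)\leqslant MF(R_i)$. The paper runs the $q_h$-iteration of Lemma \ref{lem6} (your Moser step) to obtain $\|u\|_{L^q(B_R)}\leqslant CR^{n/q+E}$ for arbitrarily large $q$, then combines this with the H\"older inequality $F(R)\leqslant \big(\int u^\alpha\big)^{(q-1)/(q-\alpha)}\big(\int u^q\big)^{(1-\alpha)/(q-\alpha)}$ and the test-function bound --- but only along the sequence $R_i$. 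The doubling property replaces $F(2R_i)$ by $MF(R_i)$ on the right, and one obtains directly $F(R_i)\leqslant CR_i^{\text{negative}}\to 0$, an immediate contradiction to non-triviality. This works uniformly in the parity of $m$; no appeal to Lemma \ref{lem_general_Liouville} or to representation formulae is needed.

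Without the doubling trick, your ``feeding back'' step relates $\int_{B_R}u$ to $\int_{B_{2R}}u$, and you would have to iterate the exponent while tracking constants --- doable, but you have not done it, and you stop at the weaker conclusion $\int_{B_R}u=o(R^n)$. That forces your awkward parity split: for $m$ odd, Lemma \ref{lem_general_Liouville} does not apply, and your proposed workaround via $w=\Delta^{m-1}u$ and a Riesz representation requires integrability of $|y|^{2-n}u^\alpha(y)$ and a justification that the harmonic remainder vanishes --- neither of which follows from $o(R^n)$ alone without further argument. Finally, your scheme explicitly relies on $\alpha<1$ for contractivity; the case $\alpha=1$ is handled in the paper by the \emph{same} doubling device (one line: $F(R_i)\leqslant CMR_i^{-2m}F(R_i)$), and $\alpha=0$ is disposed of separately since the equation becomes $\Delta^m u=-1$.
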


\begin{proof}
In view of Proposition \ref{pron=1,2}, it suffices to consider the case $n \geqslant 3$. Depending on the size of $\alpha$, we consider two possible cases. When $\alpha = 0$, the equation \eqref{negative} becomes $\Delta^m u \equiv -1,$ therefore the non-existence of entire, non-negative solution in $\Rset^n$ is a direct consequence of Lemma \ref{lemprioriestimate}, since there exists $C > 0$ such that $u + Cr^{2m}$ is polyharmonic, whose average grows faster than $r^{2m-2}$.

From now on, we only consider $\alpha \in (0, 1]$. For convenience, we divide the proof into three steps.

\smallskip
\noindent\textbf{Step 1}. Suppose that $u$ is a non-trivial, non-negative solution to $\Delta^m u = -u^\alpha$ in $\Rset^n$. By Lemma~\ref{lemprioriestimate}, we have
	\begin{align*}
	\overline u (R) &\leqslant C R^{2(m-1)} \quad \mbox{for }\; R \geqslant 1.
	\end{align*}
Hence
	\begin{align}\label{estFR}
	\int_{B_{2R}} udx =: F(R) \leqslant C R^{n+2(m-1)} \quad \mbox{for any $R \geqslant 1$.}
	\end{align}
Here $C$ is a constant independent of $R$. Note that to get the estimate \eqref{estFR} we only use the sign of $\Delta^m u$. Now via the rescaled test-function argument we fully use the equation $\Delta^m u = -u^\alpha$ to estimate $F(R)$ from below; see \eqref{ualpha}. Let $\psi$ be a smooth cut-off function satisfying $0\leqslant \psi\leqslant 1$ and \eqref{psi}.
	For any $R > 0$, let $$\phi_R(x)=\psi^{2m+1}\left(\frac{x}{R}\right).$$
It is not hard to verify the pointwise estimate $\Delta^m (\psi^{2m+1}) \leqslant C \psi$ for some constant $C>0$. Therefore, we can estimate
\begin{equation}\label{estimate4cutoff}
\begin{aligned}
	|\Delta^m \phi_R(x)|&= R^{-2m}\left|\Delta^m (\psi^{2m+1})\left(\frac{x}{R}\right)\right|\\
	&\leqslant CR^{-2m} \psi\left(\frac{x}{R}\right) = CR^{-2m} \phi_R^{1/(2m+1)}(x).
\end{aligned}
\end{equation}
Hence
	\begin{align*}
	\int_{\Rset^n}u^\alpha \phi_R dx &=-\int_{\Rset^n}\Delta^mu\, \phi_Rdx\\
&=-\int_{\Rset^n} u\,\Delta^m \phi_Rdx \leqslant CR^{-2m} \int_{\Rset^n}u\,\phi_R^{1/(2m+1)}dx.
	\end{align*}
	This yields
	\begin{align}\label{ualpha}
	\int_{B_R}u^\alpha dx\leqslant CR^{-2m}F(R), \quad \forall\; R > 0.
	\end{align}
Now we further examine $F$. In view of \eqref{estFR}, the function $F$ has at most algebraic growth at infinity. Therefore, it must be doubling along a sequence $R_i \to \infty$. We turn this observation into a claim as follows
	\begin{align}\label{Ri}
	\exists \; M > 0 \; \mbox{ and }\; R_i \to \infty \; \mbox{ such that }\; F(2R_i)\leqslant M F(R_i) \quad \forall \; i.
	\end{align}
Indeed, assume that \eqref{Ri} is false. Let us fix $M_0>2^{n+2m-2}$, then there exists $R_0 > 0$ such that
\begin{align}
\label{noRi}
F(2R)\geqslant M_0F(R), \quad \forall\; R\geqslant R_0.
\end{align}
Let $R_1 \geqslant 1$ be sufficiently large verifying $F(R_1)>0$, such a $R_1$ exists since $u$ is non-trivial. Denote $R_*:=\max\{R_1, R_0\}$. Iterating \eqref{noRi} and thanks to \eqref{estFR}, we arrive at, for any $i$,
\begin{align*}
	M_0^iF(R_*)\leqslant F(2^iR_*) \leqslant C (2^iR_*)^{n+2m-2},
	\end{align*}
that is
	\begin{align*}
	\Big(\frac{M_0}{2^{n+2m-2}}\Big)^i \leqslant \frac{CR_*^{n+2m-2}}{F(R_*)} \quad \forall\; i.
\end{align*}
But this is a just impossible if $i$ is large enough by the choice of $M_0$. So the claim \eqref{Ri} holds true. We are now ready to prove the result for $\alpha \in (0,1]$. The two cases $\alpha =1$ and $0< \alpha < 1$ must be considered separately.
	
\smallskip
\noindent\textbf{Step 2}. Consider first the case $\alpha=1$. It follows from \eqref{ualpha} with $R = 2R_i$ and \eqref{Ri} that
	$$F(R_i)\leqslant CR_i^{-2m}F(2R_i) \leqslant CMR_i^{-2m}F(R_i), \quad \forall\; i.$$
Therefore $F(R_i) = 0$ for $i$ large enough because $R_i \to \infty$, which is a contradiction since $u$ is non-trivial.
	
\smallskip
\noindent\textbf{Step 3}. Here we handle the case $\alpha\in (0,1)$. Let $(q_h)$ be the sequence defined as follows
	\begin{align*}
	q_0=1, \;\;\; \frac{1}{q_h}=\frac{\alpha}{q_{h-1}}-\frac{2m}{n}, \quad h=1,2,...
	\end{align*}
By induction, we can compute $q_h$ explicitly as
	\begin{align*}
	\frac{1}{q_h}=\alpha^h-\frac{2m(1-\alpha^h)}{n(1-\alpha)} \quad \mbox{whenever $q_h$ is well defined.}
	\end{align*}
Obviously, the sequence $(q_h^{-1})$ is decreasing since $\alpha \in (0, 1)$, and there exists a unique integer $j_* \geqslant 0$ such that
	$$\frac{1}{q_{j_*+1}}\leqslant 0< q_{j_*}.$$
We will estimate $\|u\|_{L^{q_h}(B_R)}$ successively. First, for all $0 \leqslant h \leqslant j_*$ and $R \geqslant 1$, we claim that
\begin{align}\label{qh_induction}
	\Big(\int_{B_R}u^{q_{h}}dx\Big)^{1/q_h }\leqslant CR^{(n + 2m-2)\alpha^h}.
\end{align}
Firstly, the inequality \eqref{qh_induction} for $h=0$ follows from \eqref{estFR}. Assume that \eqref{qh_induction} is true up to $h-1$ with $h\leqslant j_*$. Using the equation $\Delta^mu = -u^\alpha$, Lemma \ref{lem6}, and
\eqref{estFR}, we get that
	\begin{align}\label{estimate_qh}
	\begin{split}
\Big(\int_{B_R}u^{q_h}dx\Big)^{1/q_h}
	\leqslant & \; CR^{\frac{n}{q_h}+2m-\frac{n\alpha}{q_{h-1}}}\Big(\int_{B_{2R}}|\Delta ^mu|^{q_{h-1}/\alpha}dx\Big)^{\alpha/q_{h-1}}\\
	& +CR^{n/q_h-n}\int_{B_{2R}}udx\\
	 \leqslant &\; CR^{\frac{n}{q_h}+2m-n(\frac{1}{q_h}+\frac{2m}{n})}\Big(\int_{B_{2R}}u^{q_{h-1}}dx\Big)^{\alpha/q_{h-1}} +CR^{n/q_h+2(m-1)}\\
	 = & \; C\Big(\int_{B_{2R}}u^{q_{h-1}}dx\Big)^{\alpha/q_{h-1}} +CR^{n/q_h+2(m-1)}.
	 \end{split}
	\end{align}
Thanks to the induction hypothesis on $\|u\|_{L^{q_{h-1}}(B_R)}$, for any $R \geqslant 1$, it follows from \eqref{estimate_qh} that
 \begin{align*}
 	\Big(\int_{B_R}u^{q_h}dx\Big)^{1/q_h}&\leqslant C\Big(\int_{B_{2R}}u^{q_{h-1}}dx\Big)^\frac{\alpha}{q_{h-1}} +CR^{n/q_h+2(m-1)}\\
 	& = CR^{(n + 2m-2)\alpha^h} +CR^{n/q_h+2(m-1)}\\
 	& \leqslant CR^{(n + 2m-2)\alpha^h}.
 \end{align*}
 For the last line, we have used
 \begin{align*}
 (n + 2m-2)\alpha^h - \Big(\frac{n}{q_h}+2m-2\Big) = (1 - \alpha^h) \Big[\frac{2m}{1 - \alpha}- 2(m-1)\Big] > 0
 \end{align*}
to absorb the term $R^{n/q_h+2(m-1)}$ into the term $R^{(n + 2m-2)\alpha^h}$. Hence the claim \eqref{qh_induction} holds true for all $h\leqslant j_*$ and $R\geqslant 1$. Furthermore, by the definition of $j_*$, there holds
$$\frac{\alpha}{q_{j_*}}- \frac{2m}{n}\leqslant 0.$$
Therefore, for any $q>1$, the condition \eqref{exponentq} is always fulfilled with $\ell = q_{j_*}/\alpha$. Applying Lemma \ref{lem6}, \eqref{qh_induction} with
$h = j_*$, and \eqref{estFR}, we get
	\begin{align}
\label{uj}
\begin{split}
	\Big(\int_{B_R}u^{q}dx\Big)^{1/q}&\leqslant CR^{\frac{n}{q}+2m-\frac{n\alpha}{q_{j_*}}}\Big(\int_{B_{2R}}|\Delta ^mu|^{q_{j_*}/\alpha}dx\Big)^{\alpha/q_{j_*}}\\
& \quad +CR^{n/q-n}\int_{B_{2R}}udx\\
	&\leqslant CR^{\frac{n}{q}+2m-\frac{n\alpha}{q_{j_*}}}\Big(\int_{B_{2R}}u^{q_{j_*}}dx\Big)^{\alpha/q_{j_*}} +CR^{\frac{n}{q}+2(m-1)}\\
	&\leqslant CR^{\frac{n}{q}+2m-\frac{n\alpha}{q_{j_*}}} R^{(n+ 2m-2) \alpha^{j_*+1}} +CR^{\frac{n}{q}+2(m-1)}\\
	& = CR^{\frac{n}{q}-2\alpha^{j_*+1}+ 2m\frac{1-\alpha^{j_*+2}}{1-\alpha}} +CR^{\frac{n}{q}+2(m-1)}\\
	&\leqslant CR^{\frac{n}{q}-2\alpha^{j_*+1}+ 2m\frac{1-\alpha^{j_*+2}}{1-\alpha}}.
\end{split}
	\end{align}
Keep in mind that $\alpha < 1 < q$. In the following step, we aim to estimate $\int_{B_{2R}}u dx$ from above by using $\int_{B_{2R}}u^\alpha dx$ and $\int_{B_{2R}}u^{q}dx$. On the other hand, let
$$a=\alpha\frac{q-1}{q - \alpha} \in [0, 1), \quad p= \frac{q-\alpha}{q-1} > 1$$
and apply H\"older's inequality with help from \eqref{ualpha} to obtain
	\begin{align}\label{forF}
	 \begin{split}
	 F(R) = \int_{B_{2R}} u dx &\leqslant \Big(\int_{B_{2R}} u^{ap}dx\Big)^\frac{1}{p} \Big(\int_{B_{2R}} u^{(1-a)\frac{p}{p-1}}dx\Big)^\frac{p-1}{p}\\
	 & = \Big(\int_{B_{2R}} u^{\alpha}dx\Big)^\frac{q-1}{q - \alpha}\, \Big(\int_{B_{2R}} u^{q}dx\Big)^\frac{1-\alpha}{q - \alpha}\\
& \leqslant C \big[R^{-2m}F(2R)\big]^\frac{q-1}{q - \alpha}\, \Big(\int_{B_{2R}} u^{q}dx\Big)^\frac{1-\alpha}{q - \alpha}.
	 \end{split}
	\end{align}
Now we apply \eqref{forF} for the sequence $(R_i)$ satisfying the doubling property \eqref{Ri} to get
	\begin{align*}
	F(R_i)\leqslant C R_i^{-\frac{2m(q-1)}{q-\alpha}}F(R_i)^\frac{q-1}{q - \alpha}\Big(\int_{B_{2R_i}} u^{q}dx\Big)^\frac{1-\alpha}{q - \alpha}.
	\end{align*}
Combining the above estimate with \eqref{uj}, we get
	\begin{align}
\label{estFRi}
\begin{split}
	F(R_i) & \leqslant C R_i^{-\frac{2m(q-1)}{1-\alpha}}\int_{B_{2R_i}} u^qdx\\
 &\leqslant C R_i^{-\frac{2m(q-1)}{1-\alpha}}\, (2R_i)^{n + \big[-2\alpha^{j_*+1}+ 2m\frac{1-\alpha^{j_*+2}}{1-\alpha}\big]q}\\
	&= C (2R_i)^{\frac{2m}{1-\alpha}+n- \big(\frac{2m\alpha}{1-\alpha} + 2\big)\alpha^{j_*+1}q}.
\end{split}
	\end{align}
We fix $q > 1$ large enough such that
$$\frac{2m}{1-\alpha}+n- \Big(\frac{2m\alpha}{1-\alpha} + 2\Big)\alpha^{j_*+1}q < 0.$$
Then the estimate \eqref{estFRi} implies that $F(R_i)\to 0 $ as $i\to \infty$. Immediately, this is a contradiction because $u$ is non-trivial.
\end{proof}

\subsection{Non-existence results for $(-\Delta)^m u = -u^\alpha$ with $\alpha > 1$}
\label{subnew1}

In this subsection, we consider non-negative classical solutions of
\begin{align}
\label{new3}
(-\Delta)^{m} u = -u^\alpha \quad \mbox{in }\;\Rset^n
\end{align}
under the restriction $\alpha > 1$. This corresponds to the last column of Tables \ref{tabevenminus} and \ref{tabevenplus}.

\begin{proposition}\label{proalpha>1evenminus}
For any $n \geqslant 1$, $m \geqslant 1$, and $\alpha > 1$, the equation \eqref{new3} has no non-trivial non-negative solution.
\end{proposition}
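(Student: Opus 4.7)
The plan is to combine a rescaled test-function estimate with the Liouville criterion of Lemma \ref{lem_general_Liouville}. Since any non-negative classical solution $u$ of $(-\Delta)^m u = -u^\alpha$ automatically satisfies $(-\Delta)^m u \leqslant 0$ in $\Rset^n$, the hypothesis of Lemma \ref{lem_general_Liouville} is already half in place. All that remains is to establish the $L^1$ growth bound
\[
\int_{B_R} u \, dx = o(R^n) \quad \text{as } R \to \infty,
\]
after which the lemma forces $u \equiv 0$.

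First I would set up the test function exactly as in the proof of Proposition \ref{proalpha>0}: take the cutoff $\psi$ satisfying \eqref{psi} and define $\phi_R(x) = \psi^\beta(x/R)$, where $\beta$ is to be chosen. A direct iteration of the chain rule shows
\[
\bigl|(-\Delta)^m \phi_R(x)\bigr| \leqslant C R^{-2m}\, \phi_R(x)^{(\beta-2m)/\beta},
\]
and integration by parts against the equation gives
\[
\int_{\Rset^n} u^\alpha \phi_R \, dx = -\int_{\Rset^n} u\,(-\Delta)^m \phi_R \, dx \leqslant C R^{-2m} \int_{\Rset^n} u\, \phi_R^{(\beta-2m)/\beta} \, dx.
\]

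Next I would apply H\"older's inequality with exponents $\alpha$ and $\alpha/(\alpha-1)$ to the right-hand side, splitting $u\,\phi_R^{(\beta-2m)/\beta} = (u^\alpha \phi_R)^{1/\alpha} \cdot \phi_R^{(\beta-2m)/\beta - 1/\alpha}$. The choice
\[
\beta = \frac{2m\alpha}{\alpha-1}
\]
(valid since $\alpha > 1$, and giving $\beta > 2m$) makes the remaining power of $\phi_R$ non-negative, so the second factor is bounded by $C R^{n(\alpha-1)/\alpha}$. Absorbing $(\int u^\alpha \phi_R)^{1/\alpha}$ into the left-hand side yields
\[
\int_{B_R} u^\alpha \, dx \leqslant \int_{\Rset^n} u^\alpha \phi_R\, dx \leqslant C R^{n - \frac{2m\alpha}{\alpha-1}}.
\]
A final application of H\"older's inequality between $u$ and $1$ on $B_R$ (with exponents $\alpha$ and $\alpha/(\alpha-1)$) gives
\[
\int_{B_R} u \, dx \leqslant |B_R|^{(\alpha-1)/\alpha} \Bigl(\int_{B_R} u^\alpha \, dx\Bigr)^{1/\alpha} \leqslant C R^{n - \frac{2m}{\alpha-1}} = o(R^n),
\]
where the decay follows because $\alpha > 1$. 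Lemma \ref{lem_general_Liouville} then concludes that $u \equiv 0$.

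The argument is largely routine once the right cutoff exponent $\beta$ is identified; the only step requiring care is making sure the power of $\phi_R$ surviving the H\"older split is non-negative, which is exactly what pins down the critical value $\beta = 2m\alpha/(\alpha-1)$. The crucial conceptual point is that the entire proof reduces, via this very short test-function computation, to the general backward-induction Liouville result already proved in Lemma \ref{lem_general_Liouville}, so no new sub/super-polyharmonic property has to be derived for this range.
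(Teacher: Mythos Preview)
Your proof is correct and follows essentially the same approach as the paper's: the same cutoff exponent $\beta = 2m\alpha/(\alpha-1)$, the same test-function estimate combined with H\"older, and the same appeal to Lemma~\ref{lem_general_Liouville}. The only cosmetic difference is that the paper bounds $\int_{B_R} u$ directly via $\int_{B_R} u \leqslant \int_{\Rset^n} u\,\phi_R^{1/\alpha}$ rather than passing through a separate bound on $\int_{B_R} u^\alpha$ and a second H\"older step, but the resulting estimate $\int_{B_R} u \leqslant CR^{n-2m/(\alpha-1)}$ is identical.
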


\begin{proof}
Assume that $u$ is a non-negative solution in $\Rset^n$ of \eqref{new3} with $\alpha > 1$.
We first derive an integral estimate of $u$ over $B_R$. Let $\psi$ be a smooth, radial, cut-off function satisfying
$0\leqslant \psi\leqslant 1$ and \eqref{psi}. For any $R>0$, let
$$\phi_R(x)=\psi^p(R^{-1}x)$$
with $p=\frac{2m\alpha}{\alpha-1} > 2m$. By mimicking the argument leading to \eqref{estimate4cutoff} we know that the pointwise estimate $|\Delta^m (\psi^p)| \leqslant C \psi^{p-2m}$ holds for some constant $C>0$. Hence, we eventually have
	\begin{align*}
		|\Delta^{m} \phi_R(x)| \leqslant CR^{-2m} \psi^{p-2m}\left(\frac{x}{R}\right) = CR^{-2m} \phi_R^{1/\alpha}(x).
	\end{align*}
Therefore, similar to the estimate after \eqref{estimate4cutoff}, we obtain the following
\begin{equation} \label{test1}
\begin{aligned}
		\int_{\Rset^n}u^\alpha \phi_R dx &= -\int_{\Rset^n}u(-\Delta)^{m} \phi_Rdx \leqslant CR^{-2m} \int_{\Rset^n}u\phi_R^{1/\alpha}dx.
\end{aligned}
\end{equation}
Using H\"older's inequality and noting the support of $\phi_R$,
\begin{align}\label{test2}
	\int_{\Rset^n}u\phi_R^{1/\alpha}dx \leqslant CR^{\frac{n(\alpha-1)}{\alpha}}\Big(\int_{\Rset^n}u^\alpha\,\phi_Rdx\Big)^{1/\alpha}.
\end{align}
Putting together \eqref{test1} and \eqref{test2}, we arrive at
\begin{align*}
	\int_{B_R}udx \leqslant \int_{\Rset^n}u\phi_R^{1/\alpha}dx \leqslant CR^{n-\frac{2m}{\alpha-1}}.
\end{align*}
Applying Lemma~\ref{lem_general_Liouville}, we deduce that $u\equiv 0$ in $\Rset^n$.
\end{proof}



\subsection{Existence results for $\Delta^m u=u^\alpha$}
\label{subnew2}

This subsection is devoted to the existence results for the equation
\begin{align}
\label{new4}
\Delta^{m} u = u^\alpha \quad \mbox{in }\; \Rset^n
\end{align}
under the condition $\alpha \leqslant 1$, in other words, we do not consider the situations under applications of Propositions \ref{propA} and \ref{propB}.

\begin{proposition}\label{proalpha<1plus}
	For any positive integers $m$, $n$ and $\alpha \leqslant 1$, the equation \eqref{new4} has infinitely many positive radial solutions.
\end{proposition}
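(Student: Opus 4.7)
The plan is to construct, for each $a>0$, a global positive radial $C^{2m}$ solution of $\Delta^m u = u^\alpha$ satisfying $u(0)=a$ and $\Delta^i u(0)=0$ for $1\leqslant i\leqslant m-1$; since distinct values of $a$ give solutions with distinct values at the origin, this furnishes infinitely many distinct positive radial solutions. Setting $v_i:=\Delta^i u$ for $0\leqslant i\leqslant m-1$ and $v_m:=v_0^\alpha$, the radial form of the equation is equivalent to the coupled integral system
\[
v_i(r) = a\,\delta_{i0} + \int_0^r t^{1-n}\int_0^t s^{n-1}\,v_{i+1}(s)\,ds\,dt, \qquad 0\leqslant i \leqslant m-1.
\]
Because $u(0)=a>0$ and $t\mapsto t^\alpha$ is smooth on $(0,\infty)$, a standard Picard iteration on this system produces a unique local $C^{2m}$ radial solution on a maximal interval $[0,R_{\max})$ on which $u>0$.

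I would then establish positivity and monotonicity on $[0,R_{\max})$. Since $u>0$ there, $v_m=u^\alpha>0$, and the integral formula forces $v_{m-1}'\geqslant 0$, hence $v_{m-1}\geqslant v_{m-1}(0)=0$. A downward induction shows that every $v_i$ is nonnegative and nondecreasing on $[0,R_{\max})$; in particular $u=v_0\geqslant a>0$, so the maximal interval cannot be terminated by $u$ approaching $0$.

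The third step is to rule out finite-time blow-up. Using the monotonicity $v_{i+1}(s)\leqslant v_{i+1}(r)$ for $s\leqslant r$, iterating the integral formula yields $v_{m-j}(r)\leqslant C_j\,r^{2j}\,u(r)^\alpha$ for $j=1,\ldots,m$ when $\alpha\geqslant 0$, and the analogous $v_{m-j}(r)\leqslant C_j\,r^{2j}\,a^\alpha$ when $\alpha\leqslant 0$ (where we use $u^\alpha\leqslant a^\alpha$). The resulting master inequality $u(r)\leqslant a + C\,r^{2m}\,\max(a,u(r))^\alpha$ closes to the polynomial bound $u(r)\leqslant a + Ca^\alpha r^{2m}$ when $\alpha\leqslant 0$, and, after rearrangement, to $u(r)\leqslant C'\bigl(1+r^{2m/(1-\alpha)}\bigr)$ when $0<\alpha<1$. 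In both regimes $u$ remains bounded on each bounded interval, so $R_{\max}=\infty$.

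The main obstacle is the borderline case $\alpha=1$, in which the self-bounding inequality degenerates to $u\leqslant a + Cr^{2m} u$ and fails to close. For this case I would invoke the comparison principle of Lemma~\ref{comparison} with the explicit super-solution $\Phi(x)=a\cosh|x|$: a power-series comparison based on $\Delta(r^{2k})=2k(2k+n-2)\,r^{2k-2}$ shows that the coefficients of $\Delta^m\cosh|x|$ dominate those of $\cosh|x|$, so $\Delta^m\Phi\geqslant \Phi=\Phi^1$ in $\Rset^n$; moreover $\Phi(0)=a=u(0)$ and $\Delta^i\Phi(0)>0=\Delta^i u(0)$ for $1\leqslant i\leqslant m-1$. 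Lemma~\ref{comparison} applied with the non-decreasing nonlinearity $f(t)=t$ then forces $u\leqslant a\cosh|x|$ on $[0,R_{\max})$, so $R_{\max}=\infty$ here as well, completing the construction.
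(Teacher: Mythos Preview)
Your argument is correct. Both your proof and the paper's follow the same overall strategy---set up a radial initial value problem, obtain a positive lower bound, and preclude finite-radius blow-up via an a priori upper bound---but the technical implementations differ. The paper takes initial data $u(0)=1$, $\Delta^i u(0)=a_i>0$, uses the constant sub-solution $u_*\equiv 1$, and for the upper bound employs a single Gaussian super-solution $u^*(r)=\lambda e^{r^2/2}$: one checks that $\Delta^m u^*\geqslant (u^*)^\alpha$ for every $\alpha\in[0,1]$, while for $\alpha<0$ the bound $u\geqslant 1$ gives $\Delta^m u=u^\alpha\leqslant 1\leqslant \Delta^m u^*$, so a comparison with $f\equiv 1$ applies. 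Your route instead takes $\Delta^i u(0)=0$, derives $u\geqslant a$ and the chain $v_{m-j}(r)\leqslant C_j r^{2j}u(r)^\alpha$ directly from the iterated integral formula, and closes these to explicit polynomial bounds when $\alpha<1$; only the borderline $\alpha=1$ forces you back to a super-solution, for which $a\cosh|x|$ works (your coefficient-wise inequality $2j+n\geqslant 2j+1$ is indeed the right check). The trade-off: the paper's Gaussian handles all of $[0,1]$ at once with two clean case splits, whereas your approach avoids the comparison lemma entirely for $\alpha<1$ and yields the sharper growth rate $r^{2m/(1-\alpha)}$, at the cost of an extra case distinction.
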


\begin{proof}
We look for radial solutions of \eqref{new4}. To this purpose, consider the following initial value problem
\begin{equation}\label{eqradial}
\left\{
\begin{split}
&\Delta^m u(r) = u^\alpha(r) && \text{ in } [0, R),\\
&u(0) = 1;\; \Delta^i u(0)= a_i > 0, &&1 \leqslant i \leqslant m-1.
\end{split}
\right.
\end{equation}
Clearly, using standard ODE theory, \eqref{eqradial} has a unique positive solution in a maximal interval $[0, R_{\max})$. To turn that solution into an entire solution to \eqref{new4}, we need only to prove that $R_{\max}=\infty$. To do this, first we construct suitable sub- and super-solutions to \eqref{eqradial}, which are defined to all time, then apply then the comparison principle.

Indeed, let $u_* (r) \equiv 1$. Trivially, $u_*$ is a sub-solution to \eqref{eqradial} and $\Delta^iu_*(0) \leqslant \Delta^i u(0)$ for any
$0 \leqslant i \leqslant m-1$. Hence $u(r) \geqslant 1 = u_*(r)$ in $[0, R_{\max})$ by Lemma \ref{comparison} with $f \equiv 0$, which is clearly non-decreasing.

Now we turn our attention to the existence of a super-solution. Let $v(r) = e^{r^2/2}$ and denote
$$\Delta^k v(r) =P_k(r)e^{r^2/2}. $$
A direct computation yields $P_1(r)=r^2+n$ and $P_2 (r)=r^4 + (2n+4)r^2 + n^2+2n$; in fact, there holds
$$P_{k+1}(r)=(r^2+n)P_k+ 2rP'_k(r)+\Delta P_k(r) \quad \forall\; k \geqslant 0, \; r\geqslant 0.$$
By induction, we can readily prove that each $P_k$ is a polynomial whose coefficients are natural numbers and ${\rm deg}(P_k)=2k$. Because
\[
P_{k+1}(0)= n P_k (0) + \Delta P_k (0) = n P_k (0) + n P''_k (0)
\]
we can easily see that $P_k(0) \geqslant 1$ for all $k$. Now we let $u^*(r) = \lambda v(r)$ with $\lambda = \max(1, a_1, \ldots a_{m-1}) \geqslant 1$. It follows that
\begin{align}
\label{supsolution}
\Delta^m u^* (r) = \lambda P_m(r)e^{r^2/2} \geqslant \lambda P_m(0)e^{r^2/2} \geqslant \lambda e^{r^2/2}, \quad \forall\; r \geqslant 0
\end{align}
and $\Delta^iu^*(0) = \lambda P_i(0) \geqslant \lambda \geqslant \Delta^i u(0)$ for $0 \leqslant i \leqslant m-1$. There are two possibilities:

\noindent{\sl Case 1: $\alpha \in [0, 1]$}. In this case, \eqref{supsolution} yields $\Delta^m u^* (r) \geqslant  \lambda^\alpha e^{\alpha r^2/2} = u^* (r)^\alpha$ for $r \geqslant 0$. Therefore, $u^*$ is indeed a super-solution to \eqref{eqradial}. From this we can apply Lemma \ref{comparison} with $f(t) = t^\alpha$ in $\Rset_+$ to get that $u(r) \leqslant u^*(r)$ in $[0, R_{\max})$.

\noindent{\sl Case 2: $\alpha <0$}. In this case, on one hand, \eqref{supsolution} yields $\Delta^m u^* \geqslant 1$. On the other hand, we have already shown that $u \geqslant 1$ in $[0, R_{\max})$, which immediately yields $1 \geqslant u^\alpha = \Delta^m u$ in $B_{R_{\max}}$. Therefore, this time applying Lemma \ref{comparison} with $f \equiv 1$, now there holds $u(r) \leqslant u^*(r)$ in $[0, R_{\max})$.

Combining two cases, whenever $\alpha \leqslant 1$ we can always conclude that $u_* \leqslant u \leqslant u^*$ in $B_{R_{\max}}$, where $u_*$ and $u^*$ are smooth positive functions in $\Rset^n$ constructed above. As $u$ is locally uniformly bounded, we can obtain the local boundedness of all derivatives of $u$ up to order $2m-1$ by successive integrations; see \cite[Proposition A.2]{FF16}. This readily implies that $R_{\max} = \infty$ as claimed.

The infinity of solutions can be obtained by choosing different values of $a_i$ if $m\geqslant 2$; or at least by the natural scaling of the equation \eqref{new4}.
\end{proof}

\begin{remark}
If $n \geqslant 2$, we can also put solutions of lower dimensions in $\Rset^n$, to get infinitely many non radial solutions of \eqref{new4} with $\alpha \leqslant 1$. Similar remark goes to the equation \eqref{negative} when $n \geqslant 4$.
\end{remark}


\section{Maximum principle type result}
\label{sec-Comparision}
This subsection is devoted to proofs of Propositions \ref{thmCompareSetMinus} and \ref{thmCompareSetPlus}. We use an elementary property for non-negative super-polynharmonic radial functions.
\begin{lemma}\label{chanduoiw2}
Let $m\geqslant 1$. Assume that $w$ is non-trivial, non-negative, radial function such that $(-\Delta)^m w \geqslant 0$ in $\Rset^n$. Then, either $w(0)>0$ or $\lim_{r \to +\infty} w(r) = +\infty$.
\end{lemma}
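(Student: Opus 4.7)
The plan is to prove the dichotomy by contradiction: assume $w(0) = 0$ and show $w(r) \to +\infty$, since the only remaining possibility for a non-trivial $w$ with $w(0) = 0$ is unbounded growth. A first easy observation is that $w \geqslant 0$ and $w(0) = 0$ force the origin to be a global minimum of the $C^{2m}$ radial function $w$, so by smoothness $w'(0) = 0$ and $w''(0) \geqslant 0$, whence
\[
\Delta w(0) \;=\; n\, w''(0) \;\geqslant\; 0.
\]

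The heart of the argument will be to promote this pointwise non-negativity at a single point into the global statement $\Delta w \geqslant 0$ on all of $\Rset^n$. Once this is secured, the radial identity $(r^{n-1} w'(r))' = r^{n-1} \Delta w(r)$ together with $w'(0) = 0$ yields $w' \geqslant 0$, so $w$ is non-decreasing. If $\Delta w \equiv 0$, then $w$ is radially harmonic and $C^{2}$ at the origin, hence constant, and $w(0) = 0$ forces $w \equiv 0$, contradicting non-triviality. Otherwise, pick $r_0$ with $\Delta w(r_0) > 0$; the cascade structure below will simultaneously show that $\Delta w(r) \geqslant \Delta w(r_0) > 0$ for $r \geqslant r_0$. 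Integrating $(r^{n-1} w'(r))' \geqslant r^{n-1}\Delta w(r_0)$ from $r_0$ then yields $w'(r) \gtrsim r$ for large $r$, and hence $w(r) \to +\infty$.

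To establish $\Delta w \geqslant 0$ globally, the plan is to cascade sign information through the chain $v_j := \Delta^j w$, using the radial ODE identity
\[
r^{n-1} v_j'(r) \;=\; \int_0^r s^{n-1}\, v_{j+1}(s)\, ds,
\]
which converts the sign of $v_{j+1}$ into the monotonicity of $v_j$. The top of the chain carries the given sign $(-1)^m v_m \geqslant 0$, and the bottom carries $v_1(0) \geqslant 0$ from the previous paragraph. I would proceed by induction on $m$. For $m = 1$, the condition $-\Delta w \geqslant 0$ directly gives $w$ radially non-increasing, so $w(0)$ is the maximum and $w(0) = 0$ forces $w \equiv 0$ (the alternative $w \to +\infty$ never materializes at this order). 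For $m$ odd, Lemma~\ref{prow0}(ii) applied to $w$ immediately yields $v_{m-1} \geqslant 0$, seeding the cascade from the top; the monotonicity rule then propagates positivity downward, meeting the bottom-level input $v_1(0) \geqslant 0$ to produce $v_1 \geqslant 0$ throughout $\Rset^n$. For $m$ even, I would instead reduce to the odd case by applying the inductive hypothesis to $-\Delta w$, noting that $(-\Delta)^{m-1}(-\Delta w) = (-\Delta)^m w \geqslant 0$, and ruling out $-\Delta w \geqslant 0$ everywhere by either the minimum condition $\Delta w(0) \geqslant 0$ or by integrating twice to contradict $w \geqslant 0$ (a strongly negative $\Delta w$ at infinity would drive $w$ to $-\infty$).

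The main obstacle will be this cascade: the intermediate $v_j$ are not a priori signed, and the signs of $v_j(0)$ for $2 \leqslant j \leqslant m-2$ are not directly available. The delicate point is to reconcile the top-level input $(-1)^m v_m \geqslant 0$ with the single bottom-level input $v_1(0) \geqslant 0$ across the $m-2$ intermediate levels. In the odd-$m$ case Lemma~\ref{prow0}(ii) supplies a second seed near the top which makes the cascade immediate, but in the even-$m$ case one must pass through the auxiliary function $-\Delta w$ and invoke the inductive hypothesis, taking care that the non-triviality assumption is preserved and that all boundary signs remain compatible throughout the reduction.
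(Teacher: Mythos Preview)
Your proposal has two genuine gaps. First, in the odd-$m$ case the ``cascade'' does not close for $m\geqslant 5$: from $v_{m-1}\geqslant 0$ you only get that $v_{m-2}$ is non-decreasing, and since you have no control on $v_{m-2}(0)$ (the minimum condition at the origin constrains only $v_1(0)$), you cannot conclude $v_{m-2}\geqslant 0$, and the chain stalls. Second, in the even-$m$ case your plan to apply the inductive hypothesis to $-\Delta w$ is circular: the hypothesis requires the auxiliary function to be non-negative, yet $-\Delta w(0)\leqslant 0$ by your own observation, so $-\Delta w$ is not a legitimate input. What your dichotomy actually yields is only that $\Delta w>0$ \emph{somewhere}, not $\Delta w\geqslant 0$ everywhere, and without the latter (or without $\Delta^2 w\geqslant 0$, which you also have not proved) your subsequent claim that $\Delta w(r)\geqslant \Delta w(r_0)$ for $r\geqslant r_0$ is unsupported.

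The paper's proof avoids the intermediate target ``$\Delta w\geqslant 0$ globally'' entirely and runs the induction on the lemma itself, applied to $w$ rather than to $-\Delta w$. For $m$ odd, Lemma~\ref{prow0}(ii) gives $\Delta^{m-1}w\geqslant 0$, i.e.\ $(-\Delta)^{m-1}w\geqslant 0$ since $m-1$ is even, and one invokes the $(m-1)$-case directly. For $m$ even, $\Delta^m w\geqslant 0$ makes $r\mapsto \Delta^{m-1}w(r)$ non-decreasing; let $\ell$ be its limit. If $\ell\leqslant 0$ then $\Delta^{m-1}w\leqslant 0$ throughout, hence $(-\Delta)^{m-1}w\geqslant 0$ (now $m-1$ is odd), and again the $(m-1)$-case applies. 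If $\ell>0$ then $\Delta^{m-1}w$ is eventually bounded below by a positive constant, and $m-1$ successive integrations force $w(r)\to +\infty$. This split on the limit of the \emph{top} iterated Laplacian is the idea you are missing; it replaces your attempt to control the \emph{bottom} one.
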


\begin{proof}
Let $m=1$, if $w$ is non-negative, non-trivial, and $-\Delta w \geqslant 0$, then the strong maximum principle yields $w(0) > 0$. Suppose that the conclusion is true up to some positive integer $m$, we consider now $w$ such that $(-\Delta)^{m+1} w \geqslant 0$ in $\Rset^n$. We have two cases:

\noindent\textit{Case 1}: If $m+1$ is odd, then by using Lemma \ref{prow0}(ii), there holds $\Delta^m w\geqslant 0$, namely $(-\Delta)^m w\geqslant 0$ in $\Rset^n$. From this we get the result by induction hypothesis.

\noindent\textit{Case 2}: If $m+1$ is even, then we have $\Delta^{m+1}w \geqslant 0$, which means that $\Delta^m w(r)$ is non-decreasing in $r$. Therefore, there exists
\[
\lim_{r \to +\infty} \Delta^m w(r) = \ell \in \Rset \cup \{ + \infty \}.
\]
If $\ell > 0$, then we readily have $\lim_{r \to +\infty} w(r) = +\infty$ by comparison principle. If $\ell \leqslant 0$, then there holds $\Delta^m w \leqslant 0$ everywhere, namely $(-\Delta)^m w\geqslant 0$ in $\Rset^n$. Again we conclude with the induction hypothesis.

Combing the above two cases, we are done.
\end{proof}

\subsection{A maximum principle type result for $\Delta^m u =- u^\alpha$}

We now in position to prove Proposition \ref{thmCompareSetMinus}. Let $u$ be a non-trivial, non-negative solution to $\Delta^m u =- u^\alpha$. In view of Theorem \ref{thmMinus} we are limited to the case where $m$ is odd with $\alpha \geqslant \ps(m)$.

By way of contradiction, suppose that $u(x_0) = 0$ for $x_0\in \Rset^n$. Without loss of generality, we can assume that $x_0 =0$. Following the proof of Proposition \ref{proalpha>1evenminus}, there holds
\begin{equation}\label{eq:priorest}
\int_{B_R} u dx \leqslant C R^{n- \frac{2m}{\alpha -1}}.
\end{equation}
(Keep in mind that to get \eqref{eq:priorest}, we need only $\alpha>1$.) Taking the average over spheres, we get $\overline u(0) =0$, and more importantly
$$\Delta^m \overline u = -\overline {u^\alpha} \leqslant 0 \quad \mbox{in }\; \Rset^n.$$
Since $u$ is non-trivial and non-negative, so is $\overline u$. Applying Lemma \ref{chanduoiw2}, as $\overline u(0) =0$, there exists $r_0 >0$ such that $\overline u(r) \geqslant 1$ for any $r \geqslant r_0$. Consequently, we obtain a contradiction with \eqref{eq:priorest} if $R\to \infty$. \qed

\subsection{A maximum principle type result for $\Delta^m u = u^\alpha$}
Now we consider $\Delta^m u = u^\alpha$ with $\alpha \geqslant 0$. In view of Theorem \ref{thmPlus}, we are limited to the case either $0 \leqslant \alpha \leqslant 1$ or $m$ is even and $\alpha \geqslant \ps(m)$.

Firstly, for the case $0\leqslant \alpha \leqslant 1$ and $m \geqslant 2$, we can construct easily non-trivial non-negative radial solutions with $u(0) = 0$. Indeed, consider the similar initial value problem to \eqref{eqradial} where we choose
$$u(0) = 0, \quad \Delta u(0) = a_1 > 0, \quad \Delta^i u(0) = a_i \geqslant 0, \; i = 2,\ldots m-1.$$
Then we get a global solution which is non-trivial because it verifies
\[
u(r) \geqslant u_*(r):= \frac{a_1}{2n}r^2 \quad \text{ in } \; \Rset^n.
\]
Another easy fact is that when $\alpha \in [0, 1)$ and $m \geqslant 1$. As $\frac{2m}{1-\alpha} \geqslant 2m$, there exists $C > 0$ depending on $m$, $n$, and $\alpha$ such that $Cr^\frac{2m}{1-\alpha}$ is an entire classical solution for $\Delta^m u = u^\alpha$ in $\Rset^n$. Remark that the function $r^\frac{2m}{1-\alpha}$ belongs to $C^{2m}(\Rset^n)$.

Consider now the case $\alpha >1$ and $m$ is {\sl even}. The proof, based on a contradiction argument, is very similar to the  $\Delta^m u = -u^\alpha$ case with $m$ odd. Let $u$ be a non-trivial non-negative solution verifying $u(0) =0$, then the estimate \eqref{eq:priorest} remains valid. As $(-\Delta)^m \overline u \geqslant 0$ in $\Rset^n$ and $\overline u(0) = 0$, by Lemma \ref{chanduoiw2}, there exists $r_0 >0$ such that $\overline u(r) \geqslant 1$ for $r$ large, which is impossible seeing \eqref{eq:priorest}.

It remains to consider $m=\alpha=1$. Suppose that a classical non-negative solution to $\Delta u = u$ exists in $\Rset^n$, and $u(x_0) = 0$. We can assume that $x_0 = 0$. Clearly $\overline u$ is still a classical solution to the same equation, hence $\overline u'(0) = 0$ and $r\mapsto \overline u(r)$ is nondecreasing. By direct integration
$$\overline u'(r) = r^{1-n}\int_0^r s^{n-1}\overline u(s)ds \leqslant \frac{r}{n}\overline u(r), \quad \forall\; r \geqslant 0.$$
From this and $\overline u(0)=0$, the classical Gronwall estimate yields then $\overline u \equiv 0$ in $\Rset^n$, so is $u$, which is absurd. This completes our proof. \qed


\section*{Acknowledgments}

This work was initiated while QAN was visiting the Vietnam Institute for Advanced Study in Mathematics (VIASM) in 2017. He gratefully acknowledges the institute for its hospitality and support. The research of QAN is funded by Vietnam National Foundation for Science and Technology Development (NAFOSTED) under grant number 101.02-2016.02 and supported by the Tosio Kato Fellowship. The research of VHN is funded by the Simons Foundation Grant Targeted for Institute of Mathematics, Vietnam Academy of Science and Technology. The research of QHP is funded by Vietnam National Foundation for Science and Technology Development (NAFOSTED) under grant number 101.02-2017.307. DY is partially supported by Science and Technology Commission of Shanghai Municipality (STCSM) under grant No. 18dz2271000.

\section*{ORCID IDs}

\noindent Qu\cfac oc Anh Ng\^o: 0000-0002-3550-9689, Van Hoang Nguyen: 0000-0002-0030-5811

\end{document}